\def\P{{\mathbb P}}
\def\E{{\mathbb E}}
\def\R{{\mathbb R}}
\newtheorem{theorem}{Theorem}[section]
\newtheorem{lemma}[theorem]{Lemma}
\theoremstyle{definition}
\theoremstyle{remark}
\newtheorem{remark}[theorem]{Remark}
\numberwithin{equation}{section}
\begin{document}

\begin{frontmatter}

\title{Tail Asymptotics for the Extremes of Bivariate Gaussian Random Fields\thanks{Research supported in part by NSF grants DMS-1307470 and DMS-1309856.}}

\runtitle{Tail Asymptotics for Extremes of Bivariate Gaussian Fields}

\begin{aug}
\author{\fnms{Yuzhen} \snm{Zhou}\thanksref{a}\ead[label=e1]{yuzhenzhou@unl.edu}}
\and
\author{\fnms{Yimin} \snm{Xiao}\thanksref{b}\ead[label=e2] {xiao@stt.msu.edu}}
\address[a]{Department of Statistics, University of Nebraska-Lincoln, 340 Hardin Hall North Wing, Lincoln, NE 68583-0963 \printead{e1}}
\address[b]{Department of Statistics and Probability, Michigan State University, 619 Red Cedar Road, C413 Wells Hall, East Lansing, MI 48824-1027 
 \printead{e2}}

\runauthor{Y. Zhou and Y. Xiao}


\end{aug}

\begin{abstract}
Let $\{X(t)= (X_1(t),X_2(t))^T,\ t \in \mathbb{R}^N\}$ be an $\mathbb{R}^2$-valued continuous
locally stationary Gaussian random field with $\E[X(t)]=\mathbf{0}$.  For any compact sets
$A_1, A_2 \subset \R^N$, precise asymptotic behavior of the excursion probability
\[
\mathbb{P}\bigg(\max_{s\in A_1} X_1(s)>u,\, \max_{t\in A_2} X_2(t)>u\bigg),\ \
\text{ as }\ u \rightarrow \infty
\]
is investigated by applying the double sum method. The explicit results depend not only on the
smoothness parameters of the coordinate fields $X_1$ and $X_2$, but also on their maximum
correlation $\rho$.
\end{abstract}

\begin{keyword}
\kwd{Bivariate Gaussian field}
\kwd{Bivariate Mat\'{e}rn field}
\kwd{Excursion probability}
\kwd{Double extremes}
\kwd{Double sum method}
\end{keyword}

\received{\smonth{4} \syear{2015}}


\end{frontmatter}

\section{Introduction}
\label{Introduction}

For a real-valued Gaussian random field $X = \{X(t),$ $ t\in T \}$, where $T$ is the
parameter set, defined on probability space $(\Omega, \mathcal{F}, \P)$,
the excursion probability $\mathbb{P}\{\sup_{t\in T}X(t)>u\}$ has been studied extensively.
Extending the seminal work of \cite{Pickands_1969}, \cite{Piterbarg_1996} developed
a systematic theory on asymptotics of the aforementioned excursion probability for
a broad class of Gaussian random fields. Their method, which is called the double sum method,
has been further extended by \cite{Chan_Lai_2006} to non-Gaussian random fields and, recently,
by \cite{DHJ_14} to a non-stationary Gaussian random field $\{X(s, t), (s, t)\in \R^2\}$
whose variance function attains its maximum on a finite number of disjoint line segments.
For smooth Gaussian random fields, more accurate approximation results have been established
by using integral and differential-geometric methods (see, e.g., \cite{Adler_2000},
\cite{Adler_Taylor_2007}, \cite{Azais_Wschebor_2009} and the references therein).
For Gaussian and asymptotically Gaussian random fields, the change of measure
method was developed by  \cite{Nardi_Siegmund_Yakir_2008} and \cite{Yakir_2013}.
Many of the results in the aforementioned references have found important applications
in statistics and other scientific areas. We refer to \cite{Adler_Taylor_Worsley_2012} and
\cite{Yakir_2013} for further information.

However, only a few authors have studied the excursion probability of multivariate random fields.
\cite{Piterbarg_Stamatovich_2005} and \cite{Debicki_Kosinski_2010} established large
deviation results for the excursion probability in multivariate case.
\cite{Anshin_2006} obtained precise asymptotics for a special class of nonstationary bivariate
Gaussian processes, under quite restrictive conditions. \cite{Hashorva_Ji_2014} recently
derived precise asymptotics for the excursion probability of a bivariate fractional Brownian
motion with constant cross correlation. The last two papers only consider multivariate
processes on the real line $\mathbb{R}$ with specific cross dependence structures.
\cite{Cheng_Xiao_2014} established a precise approximation to the excursion probability by using
the mean Euler characteristics of the excursion set for a broad class of smooth bivariate
Gaussian random fields on $\mathbb{R}^N$. In the present paper we investigate asymptotics
of the excursion probability of non-smooth bivariate Gaussian random fields on $\mathbb{R}^N$,
where the methods are totally different from the smooth case.

Our work is also motivated  by the recent increasing interest in using multivariate random fields
for modeling multivariate measurements obtained at spatial locations (see, e.g.,
\cite{Gelfand_Diggle_Fuentes_Guttorp_2010}, \cite{Wackernagel_2003}).
Several classes of multivariate spatial models have been introduced by
\cite{Gneiting_Kleiber_Schlather2010}, \cite{Apanasovich_Genton_Sun_2012} and \cite{Kleiber_Nychka_2012}.
We will show in Section 2 that the main results of this paper are applicable to bivariate
Gaussian random fields  with Mat\'{e}rn cross-covariances introduced by \cite{Gneiting_Kleiber_Schlather2010}.
Furthermore, we expect that the excursion probabilities considered in this paper
will have interesting statistical applications.

Let $\{X(t),t \in \mathbb{R}^N\}$ be an $\mathbb{R}^2$-valued (not-necessarily stationary) Gaussian
random field with $\E[X(t)]=\mathbf{0}$. We write $X(t)\triangleq(X_1(t),X_2(t))^T$ and define
\begin{equation}
r_{ij}(s,t):= \E[X_i(s)X_j(t)],\ i,j=1,2.
\end{equation}
Let $|t|:=\sqrt{\sum_{j=1}^Nt_j^2}$ be the $l^2$-norm of a vector $t\in \mathbb{R}^N$.
Throughout this paper, we impose the following assumptions.

\begin{itemize}
\label{condition for BGRF}
\item[i)] $r_{ii}(s,t)=1-c_i|t-s|^{\alpha_i}+o(|t-s|^{\alpha_i})$, where $\alpha_i\in (0,2)$
and $ c_i>0$ ($i=1,2$) are constants.
\item[ii)] $|r_{ii}(s,t)|<1$ for all $|t-s|>0$, $i=1,2$.
\item[iii)] $r_{12}(s,t)=r_{21}(s,t):=r(|t-s|)$. Namely, the cross correlation is isotropic.
\item[iv)] The function $r(\cdot): [0,\infty)\rightarrow \mathbb{R}$ attains maximum only
at zero with $r(0)=\rho\in (0,1)$, i.e., $|r(t)|<\rho$ for all $t>0$. Moreover, we assume
$r'(0)=0, r''(0) < 0$ and there exists $\eta>0$, for any $s\in [0,\eta]$, $r''(s)$ exists and continuous.
\end{itemize}

The cross correlation defined here is meaningful and common in spatial statistics
where it is usually assumed that the correlation decreases as the distance between
two observations increases (see, e.g., \cite{Gelfand_Diggle_Fuentes_Guttorp_2010},
\cite{Gneiting_Kleiber_Schlather2010}). We only assume that the cross correlation is
twice continuously differentiable around the area where the maximum correlation is attained,
which is a weaker assumption than that in  \cite{Cheng_Xiao_2014} who considered smooth
bivariate Gaussian fields.

For any compact sets $A_1, A_2 \subset \R^N$, we investigate the asymptotic
behavior of the following excursion probability
\begin{equation}
 \label{bivariate excursion probablity}
\mathbb{P}\bigg(\max_{s\in A_1} X_1(s)>u, \, \max_{t\in A_2} X_2(t)>u\bigg),\ \
\text{ as }\ u \rightarrow \infty.
\end{equation}
The main results of this paper are Theorems 2.1 and 2.2 below, which demonstrate that the
excursion probability (\ref{bivariate excursion probablity}) depends not only on the
smoothness parameters of the coordinate fields $X_1$ and $X_2$, but also on their maximum
correlation $\rho$. The proofs of our Theorems 2.1 and 2.2 will be based on the double sum method.
Compared with the earlier works of  \cite{Ladneva_Piterbarg_2000},  \cite{Anshin_2006} and
\cite{Hashorva_Ji_2014}, the main difficulty in the present paper is that the
correlation function of $X_1$ and $X_2$ attains its maximum over the set $D:=\{(s,s): \, s\in A_1
\cap A_2\}$ which may have different geometric configurations. Several non-trivial
modifications for carrying out the arguments in the double sum method have to be made.

This paper raises several open questions. First, the cases of $\alpha_1= 2$ or $\alpha_2= 2$ have not been
considered in this paper. The main difficulty is that, when $\alpha_1 = 2$, the sample functions of $X_1$ may 
either be differentiable or non-differentiable.   
In view of the method in this paper, the proof of Lemma \ref{uniformly conditional convergence} on 
the uniform convergence of finite dimensional distributions for bivariate process breaks down 
when $\alpha_1= 2$ or $\alpha_2= 2$. 
Studying the asymptotics of (\ref{bivariate excursion probablity}) when $\alpha_1= 2$ or/and $\alpha_2= 2$  
requires different methods for dealing with differentiable or non-differentiable cases. When both $X_1$ and $X_2$ have twice continuously 
differentiable sample functions, this problem has been studied by \cite{Cheng_Xiao_2014}. The authors plan to 
study the remaining cases  in their future work.
Second, it would be interesting to
study the excursion probabilities when $\{X(t),\,t  \in \mathbb{R}^N\}$ is anisotropic or non-stationary, 
or taking values in $\R^d$ with $d \ge 3$. In the last problem, 
the covariance and cross-covariance structures become more complicated.
We expect that the pairwise maximum cross correlations and the size (e.g.,
the Lebesgue measure) of the set  where all the pairwise cross correlations
attain their maximum values (if not empty) will play an important role.


The rest of the paper is organized as follows. Section \ref{sec_main results and discussion} states
the main theorems with some discussions and provides an application of the main theorems
to the bivariate Gaussian fields with Mat\'{e}rn cross-covariances introduced by
\cite{Gneiting_Kleiber_Schlather2010}.  We state the key lemmas and provide proofs of our main 
theorems in Section \ref{sec_proof of main results}. The proofs of the lemmas are given in  
Section \ref{sec_proof of lemmas}.

We end the introduction with some notation. 
For any $t\in \mathbb{R}^N$,  $|t|$ denotes its $l^2$-norm.  An integer vector
$\mathbf{k}\in \mathbb{Z}^N$ is written as $\mathbf{k}=(k_1,...,k_N)$. For $\mathbf{k}\in \mathbb{Z}^N$
and $T \in \mathbb{R}_+= [0, \infty)$, we define the cube $[\mathbf{k}T,(\mathbf{k}+1)T]:=
\prod_{i=1}^N [k_iT,(k_i+1)T]$. For any integer $n$, $mes_n(\cdot)$ denotes the $n$-dimensional
Lebesgue measure. An unspecified positive and finite constant will be denoted by $C_0$.
More specific constants are numbered by $C_1,C_2, \ldots.$

\section{Main Results and Discussions}
\label{sec_main results and discussion}

We recall the Pickands constant first (see, \cite{Pickands_1969, Piterbarg_1996, Dieker_Yakir_2014}).
Let $\chi = \{\chi(t),t\in \mathbb{R}^N\}$ be a (rescaled) fractional Brownian motion
with Hurst index $\alpha/2 \in (0, 1)$, which is a centered Gaussian field with
covariance function $\mathbb{E}[\chi(t)\chi(s)]=|t|^{\alpha}+|s|^{\alpha}-|t-s|^{\alpha}$.

As in \cite{Ladneva_Piterbarg_2000} and \cite{Anshin_2006}, we define for any
compact sets $\mathbb{S}, \mathbb{T} \subset \mathbb{R}^N$,
\begin{equation}
\label{H_alpha(S,T)}
H_\alpha(\mathbb{S},\mathbb{T}):=\int_0^\infty e^x\cdot\mathbb{P}\Big(\sup_{s\in \mathbb{S}}
\big(\chi(s)-|s|^\alpha \big)>x,\, \sup_{t\in \mathbb{T}}  \big(\chi(t)-|t|^\alpha\big)>x\Big)\,dx.
\end{equation}
Let $H_\alpha(\mathbb{T})=H_\alpha(\mathbb{T},\mathbb{T})$. Then, the Pickands constant is defined as
\begin{equation}
\label{Pickands constant}
H_\alpha:=\lim_{T\rightarrow \infty} \frac{H_\alpha([0,T]^N)}{T^N},
\end{equation}
which is positive and finite (cf. \cite{Piterbarg_1996}).

Before moving to the tail probability of extremes of a bivariate Gaussian random field,
let us consider the tail probability of a standard bivariate Gaussian vector $(\xi,\eta)$ with
correlation $\rho$. It is known that (see, e.g., \cite{Ladneva_Piterbarg_2000})
\begin{align*}
\mathbb{P}(\xi>u,\eta>u)=&\Psi(u,\rho)(1+o(1)),\ \text{as}\ u\rightarrow \infty,
\end{align*}
where
\begin{align*}
\Psi(u,\rho):=\frac{(1+\rho)^2}{2\pi u^2 \sqrt{1-\rho^2}}\exp\left(-\frac{u^2}{1+\rho}\right).
\end{align*}
The exponential part of the tail probability above is determined by the correlation $\rho$. As shown
by Theorems \ref{theorem_Jordan measurable sets}
and \ref{theorem: Jordan measurable sets with mes zero} below,
similar phenomenon also happens for the tail probability of double extremes of  $\{X(t),t \in \R^N\}$,
where the exponential part is determined by the maximum cross correlation of the coordinate fields $X_1$ and $X_2$.

We will study double extremes of $X$ on the domain $A_1\times A_2$ where $A_1,A_2$ are
bounded Jordan measurable sets in $\mathbb{R}^N$. That is, the boundaries
of $A_1$ and $A_2$ have $N$-dimensional Lebesgue measure $0$  (see, e.g.,
\cite{Piterbarg_1996}, p.$105$). We only consider the case when
$A_1\cap A_2\neq \emptyset$, in which the maximum cross correlation $\rho$ can be attained.

If $mes_N(A_1\cap A_2)\neq 0$, we have the following theorem.
\begin{theorem}
\label{theorem_Jordan measurable sets}
Let $\{X(t),t \in \R^N\}$ be a bivariate Gaussian random field that satisfies the
assumptions in Section \ref{Introduction}. If $mes_N(A_1\cap A_2)\neq 0$, then
as $u \rightarrow \infty$,
\begin{equation}
\label{double extremes asymptotics_Jordan measurable sets}
\begin{split}
&\mathbb{P}\bigg(\max_{s\in A_1} X_1(s)>u,\, \max_{t\in A_2} X_2(t)>u\bigg) \\
&=(2\pi)^{\frac{N}{2}}(-r''(0))^{-\frac{N}{2}}c_1^{\frac{N}{\alpha_1}}
c_2^{\frac{N}{\alpha_2}} mes_N(A_1\cap A_2)  H_{\alpha_1}H_{\alpha_2} \\
&\qquad \times (1+\rho)^{-N(\frac{2}{\alpha_1}+\frac{2}{\alpha_2}-1)}\,
u^{N(\frac{2}{\alpha_1}+\frac{2}{\alpha_2}-1)}\Psi(u,\rho)(1+o(1)).
\end{split}
\end{equation}
\end{theorem}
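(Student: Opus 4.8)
The plan is to run the double sum method, exploiting the fact that the joint tail $\P(X_1(s)>u,\,X_2(t)>u)$ is governed by $\Psi(u,r(|t-s|))$, whose exponential rate $\exp(-u^2/(1+r(|t-s|)))$ is maximal exactly on the diagonal $D=\{(s,s):s\in A_1\cap A_2\}$, where $r(0)=\rho$. First I would prove that the contribution to \eqref{bivariate excursion probablity} from pairs $(s,t)$ with $|t-s|>\delta$ is asymptotically negligible: by assumption (iv) one has $r(|t-s|)\le\rho-\epsilon(\delta)<\rho$ there, so the exponential rate is strictly smaller and a coarse double sum bound (Piterbarg's inequality together with a Borell--TIS estimate) renders this part $o(1)$ relative to the claimed asymptotics. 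This reduces matters to a $\delta$-neighborhood of $D$, parametrized by a longitudinal base point $s\in A_1\cap A_2$ and a transverse displacement $w=t-s$ with $|w|\le\delta$. Because $r'(0)=0$ and $r''(0)<0$, we have $r(|w|)=\rho+\tfrac12 r''(0)|w|^2+o(|w|^2)$, whence
\[
\exp\!\Big(-\frac{u^2}{1+r(|w|)}\Big)=\exp\!\Big(-\frac{u^2}{1+\rho}\Big)\,\exp\!\Big(\frac{u^2 r''(0)|w|^2}{2(1+\rho)^2}\Big)\,(1+o(1)),
\]
so the transverse variable concentrates at scale $|w|\sim u^{-1}$.

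The heart of the argument is the single-cube-pair estimate. I would discretize the $X_1$-locations at the Pickands scale $\lambda_1=\beta_1 u^{-2/\alpha_1}T$ and the $X_2$-locations at $\lambda_2=\beta_2 u^{-2/\alpha_2}T$, where $\beta_i=(1+\rho)^{2/\alpha_i}c_i^{-1/\alpha_i}$; both are finer than $u^{-1}$ since $\alpha_i<2$, and the factor $(1+\rho)^{2/\alpha_i}$ is forced by the conditional structure, as the joint tail makes $\tfrac{u}{1+\rho}(X_i-u)$ the correct local variable. For base cubes centered at $s_0$ and $t_0=s_0+w$, conditioning $(X_1(s_0),X_2(t_0))$ at the critical level of $\Psi(u,r(|w|))$ and invoking the uniform conditional limit theorem of Lemma~\ref{uniformly conditional convergence}, the rescaled fields $\tfrac{u}{1+\rho}\big(X_1(s_0+\lambda_1 v_1)-u\big)$ and $\tfrac{u}{1+\rho}\big(X_2(t_0+\lambda_2 v_2)-u\big)$ converge to $\chi_{\alpha_1}(v_1)-|v_1|^{\alpha_1}$ and $\chi_{\alpha_2}(v_2)-|v_2|^{\alpha_2}$ plus conditional drifts. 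The decisive point, and the reason the final answer factorizes as $H_{\alpha_1}H_{\alpha_2}$ rather than as a joint constant $H_\alpha(\mathbb{S},\mathbb{T})$ from \eqref{H_alpha(S,T)}, is that the cross correlation $r$ is $C^2$ while each coordinate field is only $\alpha_i$-rough; hence the cross-covariance of the two families of rescaled increments is of strictly smaller order and vanishes in the limit, making the two local suprema asymptotically independent. The single-pair probability is therefore $H_{\alpha_1}([0,T]^N)\,H_{\alpha_2}([0,T]^N)\,\Psi(u,r(|w|))(1+o(1))$.

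It remains to assemble the double sum, letting $u\to\infty$ and then $T\to\infty$. Factoring out the per-pair constants $H_{\alpha_1}([0,T]^N)H_{\alpha_2}([0,T]^N)$, the longitudinal sum over $X_1$-base cubes yields $mes_N(A_1\cap A_2)\,\lambda_1^{-N}$ (Jordan measurability of $A_1,A_2$ ensuring that the boundary contributes only to the $o(1)$ error), while the transverse sum over displacements $w$ is a Riemann sum at mesh $\lambda_2$ for the Gaussian integral
\[
\int_{\R^N}\exp\!\Big(\frac{u^2 r''(0)|w|^2}{2(1+\rho)^2}\Big)\,dw=\Big(\frac{2\pi(1+\rho)^2}{-u^2 r''(0)}\Big)^{N/2},
\]
and so equals $\Psi(u,\rho)\,\lambda_2^{-N}\big(2\pi(1+\rho)^2/(-u^2 r''(0))\big)^{N/2}(1+o(1))$. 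Using $H_{\alpha_i}([0,T]^N)/T^N\to H_{\alpha_i}$ from \eqref{Pickands constant} to cancel the $T$-factors contained in $\lambda_i^{-N}=\beta_i^{-N}u^{2N/\alpha_i}T^{-N}$, the $c_i$ and $(1+\rho)$ dependence in $\beta_i^{-N}=(1+\rho)^{-2N/\alpha_i}c_i^{N/\alpha_i}$ combines with the $(2\pi)^{N/2}(1+\rho)^N(-r''(0))^{-N/2}u^{-N}$ from the transverse integral so that the powers of $u$ become $u^{N(2/\alpha_1+2/\alpha_2-1)}$ and those of $(1+\rho)$ become $(1+\rho)^{-N(2/\alpha_1+2/\alpha_2-1)}$; collecting everything reproduces exactly \eqref{double extremes asymptotics_Jordan measurable sets}.

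The step I expect to be the main obstacle is the error control in the double sum, precisely the difficulty flagged after \eqref{bivariate excursion probablity}: the set $D$ may have nontrivial geometry, and $\alpha_1\neq\alpha_2$ forces two distinct Pickands scales that must be reconciled with the third, coarser, transverse scale $u^{-1}$. Concretely, the hard part is to show that the ``off-diagonal'' double sum over pairs of cube-pairs with differing base separations is $o(1)$ relative to the main term; this demands a form of Lemma~\ref{uniformly conditional convergence} that is uniform in $w$ and in the base points, a bivariate Pickands-type double sum bound adapted to the two-scale setting, and a careful treatment of the boundary of $A_1\cap A_2$ in which Jordan measurability is used to absorb the edge cubes into the $o(1)$ error.
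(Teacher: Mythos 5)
Your proposal follows essentially the same route as the paper's proof: reduction to a $\sqrt{\log u}/u$-neighborhood of the diagonal via a Piterbarg/Borell-type bound, a two-scale Pickands discretization with the uniform conditional limit theorem (the paper's Lemmas \ref{main lemma} and \ref{uniformly conditional convergence}) yielding the factorized per-pair constant $H_{\alpha_1}H_{\alpha_2}$ through the asymptotic decorrelation of the rescaled increments, a Riemann-sum evaluation of the transverse Gaussian sum (Lemma \ref{lem_Riemann Sum}), and a Bonferroni lower bound controlled by a double-double-sum estimate (Lemmas \ref{double double local extremes lemma} and \ref{lem_mathcal_H(m)}). The bookkeeping of constants and powers of $u$, $T$, and $(1+\rho)$ is correct, and you have accurately identified the genuinely hard steps, so no gap to report.
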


If $mes_N(A_1\cap A_2)=0$, the above theorem is not informative. We have not been able to
obtain a general explicit formula. Instead, we consider the special cases
\begin{equation}\label{Eq:intM}
A_1=A_{1,M}\times \prod_{j=M+1}^N[S_j,T_j]\ \hbox{ and }\ A_2=A_{2,M}\times \prod_{M+1}^N[T_j,R_j],
\end{equation}
where $A_{1,M}$ and $A_{2,M}$ are $M$ dimensional Jordan sets with $mes_M(A_{1,M}\cap A_{2,M})
\neq 0$ and $S_j\leq T_j\leq R_j,\, j=M+1, \ldots, N, \, 0\leq M\leq N-1$. For simplicity
of notation, let $mes_0(\cdot)\equiv 1$. Our next theorem shows
that the excursion probability is smaller than that in
(\ref{double extremes asymptotics_Jordan measurable sets}) by a factor of $u^{M-N}$.

\begin{theorem}
\label{theorem: Jordan measurable sets with mes zero}
Let $\{X(t),t \in \R^N\}$ be a bivariate Gaussian random field that satisfies the assumptions in
Section \ref{Introduction}, and let $A_1,A_2$ be as in (\ref{Eq:intM}) such that $mes_M(A_{1,M}\cap A_{2,M})>0$.
Then as $u \to \infty$,
\begin{equation}
\label{double extremes asymptotics_Jordan measurable sets_mes zero}
\begin{split}
&\mathbb{P}\bigg(\max_{s\in A_1} X_1(s)>u,\, \max_{t\in A_2} X_2(t)>u\bigg) \\
&=(2\pi)^{\frac{M}{2}}(-r''(0))^{-\frac{2N-M}{2}}c_1^{\frac{N}{\alpha_1}}c_2^{\frac{N}
{\alpha_2}} H_{\alpha_1}H_{\alpha_2}  mes_M(A_{1,M}\cap A_{2,M})\\
&\qquad \times   (1+\rho)^{2N-M-\frac{2N}{\alpha_1}-\frac{2N}{\alpha_2}}\,
u^{M+N(\frac{2}{\alpha_1}+\frac{2}{\alpha_2}-2)}\Psi(u,\rho)(1+o(1)).
\end{split}
\end{equation}
\end{theorem}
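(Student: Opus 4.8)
The plan is to follow the double sum method exactly as in the proof of Theorem \ref{theorem_Jordan measurable sets}, modifying it to account for the degenerate geometry of $A_1\cap A_2$ in the last $N-M$ coordinates. The starting observation is that, under the product structure (\ref{Eq:intM}), a point $s\in A_1$ and a point $t\in A_2$ can coincide only on the set
\[
D:=\{(s,s):\, s\in A_1\cap A_2\}=(A_{1,M}\cap A_{2,M})\times\prod_{j=M+1}^{N}\{T_j\},
\]
since for $j>M$ the intervals $[S_j,T_j]$ and $[T_j,R_j]$ meet only at the common endpoint $T_j$. Because $r(|t-s|)<\rho$ for $|t-s|>0$ and $|r_{ii}(s,t)|<1$ for $s\neq t$, the joint event is governed by configurations in which $X_1$ and $X_2$ both attain high values near a common location on $D$. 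I would therefore first reduce the problem to a $\delta$-neighborhood of $D$ and show that the complementary contribution is of strictly smaller exponential order.

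Within this neighborhood I would discretize $A_1$ and $A_2$ at their respective natural scales $u^{-2/\alpha_1}$ and $u^{-2/\alpha_2}$, and organize the analysis by a separation of scales: since $\alpha_i<2$, the microscopic scale $u^{-2/\alpha_i}$ is much finer than the correlation-decay scale $u^{-1}$ implied by $r(h)=\rho+\tfrac12 r''(0)h^2+o(h^2)$. For a pair of cubes centered at $v\in A_1$ and $w\in A_2$ with $|v-w|$ small, I would condition on the field values being near $u$ and invoke Lemma \ref{uniformly conditional convergence}: after rescaling, the local fluctuations of $X_1$ and $X_2$ converge to two \emph{independent} fractional Brownian structures, so the single-pair probability factorizes and, upon integrating out the conditioning level, produces the $N$-dimensional single-cube factors $c_1^{N/\alpha_1}c_2^{N/\alpha_2}H_{\alpha_1}H_{\alpha_2}$ with scaling $u^{2N/\alpha_1+2N/\alpha_2}(1+\rho)^{-2N/\alpha_1-2N/\alpha_2}$, together with the factor $\Psi(u,r(|v-w|))$. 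Using $\Psi(u,r(|v-w|))\sim\Psi(u,\rho)\exp\!\big(-\tfrac{-r''(0)\,u^2}{2(1+\rho)^2}|v-w|^2\big)$, the dependence on the cube positions becomes an explicit Gaussian in the difference $d=v-w$.

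The summation over cube pairs is then carried out coordinate by coordinate, and this is where the bulk and corner coordinates part ways. For $j=1,\dots,M$ the centers range over $A_{1,M}\cap A_{2,M}$, so summing over the center yields the factor $mes_M(A_{1,M}\cap A_{2,M})$, while the unconstrained difference integral $\int_{\R}e^{-cd_j^2}\,dd_j=\sqrt{2\pi/(-r''(0))}\,(1+\rho)/u$ contributes one power each of $u^{-1}$, $(2\pi)^{1/2}$, $(-r''(0))^{-1/2}$ and $(1+\rho)$. For $j=M+1,\dots,N$ the constraint $v_j\le T_j\le w_j$ forces the peaks to the corner; writing $v_j=T_j-a$, $w_j=T_j+b$ with $a,b\ge0$, the difference integral becomes the half-line integral $\int_0^\infty\!\!\int_0^\infty e^{-c(a+b)^2}\,da\,db=(1+\rho)^2/((-r''(0))u^2)$, contributing two powers of $u^{-1}$, no factor of $2\pi$, one power of $(-r''(0))^{-1}$ and two of $(1+\rho)$. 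Collecting the single-cube factors with the $M$ bulk and $N-M$ corner contributions, and multiplying by $\Psi(u,\rho)$, reproduces exactly the constants $(2\pi)^{M/2}$, $(-r''(0))^{-(2N-M)/2}$, the power $u^{M+N(2/\alpha_1+2/\alpha_2-2)}$ and the prefactor $(1+\rho)^{2N-M-2N/\alpha_1-2N/\alpha_2}$ in (\ref{double extremes asymptotics_Jordan measurable sets_mes zero}); in particular the replacement of $N-M$ full-line integrals by half-line integrals is precisely what produces the $u^{M-N}$ reduction relative to (\ref{double extremes asymptotics_Jordan measurable sets}).

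I expect the main obstacle to be the corner analysis in the last $N-M$ coordinates and the attendant negligibility estimates. In Theorem \ref{theorem_Jordan measurable sets} the near-diagonal cube pairs fill out a set of positive $N$-dimensional measure, whereas here the relevant pairs concentrate near the lower-dimensional corner set $D$, so one must verify both that the half-line integrals correctly capture the boundary contribution and that the standard double-sum upper bound (controlling overlapping excursions and long-range pairs) remains negligible when the admissible difference region is thin and one-sided. Keeping the bulk and corner coordinates decoupled in these estimates, and ruling out spurious contributions from pairs $(v,w)$ that lie near $D$ in the first $M$ coordinates but straddle the corner in the remaining ones, is the delicate part of the argument.
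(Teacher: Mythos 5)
Your proposal is correct and follows essentially the same route as the paper: the double-sum skeleton of Theorem \ref{theorem_Jordan measurable sets} is kept intact, and the only substantive change is the evaluation of the Riemann sum over cube pairs, where the paper's Lemma \ref{lem_Riemann Sum-2} performs exactly your coordinate splitting --- full Gaussian integrals in the first $M$ coordinates and one-sided integrals of the form $\int_0^\infty x_j e^{-ax_j^2}\,dx_j$ (identical to your $\int_0^\infty\!\int_0^\infty e^{-c(a+b)^2}\,da\,db$) in the last $N-M$. Your bookkeeping of the resulting powers of $u$, $2\pi$, $(1+\rho)$ and $(-r''(0))$ matches \eqref{double extremes asymptotics_Jordan measurable sets_mes zero} exactly, and the negligibility estimates you flag as delicate go through unchanged since the bound $\Sigma_2 \lesssim T^{-1/2}\Sigma_1$ is insensitive to the geometry of $\mathcal{C}$.
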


\setcounter{theorem}{0}
\begin{remark} The following are some additional remarks about Theorems
\ref{theorem_Jordan measurable sets} and
\ref{theorem: Jordan measurable sets with mes zero}.
\begin{itemize}
\item The excursion probability in (\ref{bivariate excursion probablity})
depends on the region where the
maximum cross correlation is attained. In our setting, the maximum cross
correlation $\rho$ is attained on $D:=\{(s,s)\ |\ s\in A_1\cap A_2\}$.

\item For Theorem \ref{theorem: Jordan measurable sets with mes zero}, let us
consider the extreme case when $M=0$, i.e., $A_1\cap A_2=\{(T_1,...,T_N)\}$.
The exponential part still reaches $-\frac{u^2}{1+\rho}$, although the
maximum cross correlation $\rho$ is attained at a single point.

\item To compare our results with \cite{Anshin_2006}, we consider a centered
Gaussian process $\{X(t)=(X_1(t),X_2(t)),t \in \mathbb{R}\}$ and $A_1=A_2=[0,T]$. In our
setting, the cross correlation attains its maximum on the line $D=\{(s,s)\ |\ s\in [0,T]\}$,
while in \cite{Anshin_2006} it only attains at a unique point in $[0,T]\times[0,T]$ because
of the assumption $\mathbf{C2}$. This is the reason why
the power of $u$ in our settings is $\frac{2}{\alpha_1}+\frac{2}{\alpha_2}-3$
instead of $\frac{2}{\alpha_1}+\frac{2}{\alpha_2}-4$ in \cite{Anshin_2006}.
\item Even though Theorem \ref{theorem: Jordan measurable sets with mes zero}
only deals with a special case of  $A_1$, $A_2$ with $mes_N(A_1\cap A_2)=0$,
its method of proof can be applied to more general cases provided some information
on $A_1$ and $A_2$ is specified. The key step is to reevaluate the infinite
series in Lemma \ref{lem_Riemann Sum-2}.
\end{itemize}
\end{remark}
\setcounter{theorem}{2}

We end this section with an application of Theorems \ref{theorem_Jordan measurable sets}
and \ref{theorem: Jordan measurable sets with mes zero} to bivariate Gaussian random fields
with the Mat\'{e}rn correlation functions introduced by \cite{Gneiting_Kleiber_Schlather2010}.

The Mat\'{e}rn correlation function $M(h|\nu,a)$, where $a>0,\nu>0$ are scale and
smoothness parameters, is widely used
to model covariance structures in spatial statistics. It is defined as
\begin{equation}
M(h|\nu,a):=\frac{2^{1-\nu}}{\Gamma(\nu)}(a|h|)^\nu K_\nu(a|h|),
\end{equation}
where $K_\nu$ is a modified Bessel function of the second kind.
In \cite{Gneiting_Kleiber_Schlather2010}, the authors introduce the full bivariate
Mat\'{e}rn field $X(s)=(X_1(s),X_2(s))$, i.e., an $\mathbb{R}^2$-valued Gaussian
random field on $\mathbb{R}^N$ with zero mean and matrix-valued covariance functions:
\begin{equation}
C(h)=\left(\begin{array}{ll}
C_{11}(h)&C_{12}(h)\\
C_{21}(h)&C_{22}(h)
\end{array}
\right),
\end{equation}
where $C_{ij}(h):=\E[X_i(s+h)X_j(s)]$ are specified by
\begin{eqnarray}
C_{11}(h)&=&\sigma_1^2M(h|\nu_1,a_1),\\
C_{22}(h)&=&\sigma_2^2M(h|\nu_2,a_2),\\
C_{12}(h)&=&C_{21}(h)=\rho\sigma_1\sigma_2M(h|\nu_{12},a_{12}).\label{Matern Cross Covariance}
\end{eqnarray}

According to \cite{Gneiting_Kleiber_Schlather2010}, the above model
is valid if and only if
\begin{equation}
\label{validy cond}
\begin{split}
\rho^2 &\leq \frac{\Gamma(\nu_1+N/2)\Gamma(\nu_2+N/2)}{\Gamma(\nu_1)\Gamma(\nu_2)}
\frac{\Gamma(\nu_{12})^2}{\Gamma(\nu_{12}+N/2)^2}
\frac{a_1^{2\nu_1}a_2^{2\nu_2}}{a_{12}^{4\nu_{12}}}\\
& \qquad \  \ \times
\inf_{t\geq 0}\frac{(a_{12}^2+t^2)^{2\nu_{12}+N}}{(a_1^2+t^2)^{\nu_1+N/2}(a_2^2+t^2)^{\nu_2+N/2}}.
\end{split}
\end{equation}

Especially, when $a_1=a_2=a_{12}$, condition (\ref{validy cond}) is reduced to
\begin{equation}
\rho^2\leq \frac{\Gamma(\nu_1+N/2)\Gamma(\nu_2+N/2)}{\Gamma(\nu_1)\Gamma(\nu_2)}
\frac{\Gamma(\nu_{12})^2}{\Gamma(\nu_{12}+N/2)^2},
\end{equation}
in which case the choice of $\rho$ is fairly flexible.

Here we focus on a standardized bivariate Mat\'{e}rn field, that is, we assume
$\sigma_1=\sigma_2=1$, $a_1=a_2=a_{12}=1$ and $\rho>0$. Moreover, we assume $\nu_1,\nu_2\in (0,1)$ and $\nu_{12}> 1$.
In this case, the bivariate Mat\'{e}rn field $\{X(t), \, t \in \R^N\}$
satisfies the assumptions in Section \ref{Introduction}.

Indeed, Assumption i) in Section \ref{Introduction} is satisfied since 
\begin{align*}
M(h|\nu_i,a)=1-c_i|t|^{2\nu_i}+o(|t|^{2\nu_i}),
\end{align*}
where $c_i=\frac{\Gamma(1-\nu_i)}{2^{2\nu_i}\Gamma(1+\nu_i)},\ i=1,2$ (see, e.g., \cite{Stein_1999Interpo}, p. 32).
Assumption ii) holds immediately if we use the following integral representation of $M(h|\nu,a)$
(see, e.g., \cite{Abramowitz_Stegun_1972}, Section $9.6$)
\begin{align}
\label{integral form for Matern correlation}
M(h|\nu,a)=\frac{2\Gamma(\nu+1/2)}{\sqrt{\pi}\Gamma(\nu)}
\int_0^\infty \frac{\cos(a|h|r)}{(1+r^2)^{\nu+1/2}}\, dr.
\end{align}
Assumption iii) holds by the definition of cross correlation in
\eqref{Matern Cross Covariance}. For Assumption iv),
we only need to check the smoothness of $M(h|\nu,a)$. By another
integral representation of $M(h|\nu,a)$ (see, e.g., \cite{Abramowitz_Stegun_1972}, Section $9.6$), i.e.,
\begin{align*}
M(h|\nu,a)=\frac{2^{1-2\nu}(a|h|)^{2\nu}}{\Gamma(\nu+1/2)\Gamma(\nu)}\int_1^\infty e^{-a|h|r}(r^2-1)^{\nu-1/2}\,dr,
\end{align*}
one can verify that $M(h|\nu,a)$ is infinitely differentiable when $|h|\neq 0$.
Meanwhile, $M''(0|\nu,a)$ exists and is continuous when $\nu>1$ which can be
proven by taking twice derivatives to the integral representation in
\eqref{integral form for Matern correlation} w.r.t. $|h|$. So Assumption iv)
holds.


Applying Theorem \ref{theorem_Jordan measurable sets} to the double excursion probability of
$X(s)$ over $[0,1]^N$, we have
\begin{eqnarray*}
\label{double extremes for matern field}
&&\mathbb{P}\Big(\max_{s\in [0,1]^N} X_1(s)>u,\, \max_{t\in [0,1]^N} X_2(t)>u\Big)\nonumber\\
&&=(2\pi)^{\frac{N}{2}}(-C_{12}''(0))^{-\frac{N}{2}}c_1^{\frac{N}{2\nu_1}}c_2^{\frac{N}{2\nu_2}}(1+\rho)^{-N(\frac{1}
{\nu_1}+\frac{1}{\nu_2}-1)} H_{2\nu_1}H_{2\nu_2} \nonumber\\
&&\quad \times  u^{N(\frac{1}{\nu_1}+\frac{1}{\nu_2}-1)}\Psi(u,\rho)(1+o(1)),\ \ \text{ as }\, u \rightarrow \infty.
\end{eqnarray*}

Secondly, when the two measurements are observed on two regions
which only share part of boundaries, we use Theorem
\ref{theorem: Jordan measurable sets with mes zero} to obtain
the excursion probability. For example, if $X_1(s)$ are
observed on the region $[0,1]^N$ and $X_2(s)$ on $[0,1]^{N-1}\times [1,2]$,
then as $u\rightarrow \infty$,
\begin{eqnarray*}
\label{double extremes for matern field 2}
&&\mathbb{P}\bigg(\max_{s\in [0,1]^N} X_1(s)>u,\, \max_{t\in [0,1]^{N-1}\times [1,2]} X_2(t)>u\bigg)\nonumber\\
&&=(2\pi)^{\frac{N-1}{2}}(-C_{12}''(0))^{-\frac{N+1}{2}}c_1^{\frac{N}{2\nu_1}}c_2^{\frac{N}{2\nu_2}}
(1+\rho)^{1-N(\frac{1}{\nu_1}+\frac{1}{\nu_2}-1)} H_{2\nu_1}H_{2\nu_2} \nonumber\\
&&\qquad \times u^{N(\frac{1}{\nu_1}+\frac{1}{\nu_2}-1)-1}\Psi(u,\rho)(1+o(1)).
\end{eqnarray*}

\section{Proofs of the main results}
\label{sec_proof of main results}

The proofs of Theorems \ref{theorem_Jordan measurable sets} and
\ref{theorem: Jordan measurable sets with mes zero} are based on
the double sum method (\cite{Piterbarg_1996}) and the work of
\cite{Ladneva_Piterbarg_2000}. Since the latter deals with the
tail probability $\mathbb{P}(\max_{t\in[T_1,T_2]}X(t)>u,\,
\max_{t\in[T_3,T_4]}X(t)>u)$ of a univariate
 Gaussian process $\{X(t),t\in \mathbb{R}\}$, their method
is not sufficient for carrying out the double sum method for a
bivariate random field.


Lemmas \ref{main lemma} and \ref{double double local extremes lemma} below extend Lemma 
$1$ and Lemma $9$ in \cite{Ladneva_Piterbarg_2000} to
the bivariate random field $\{(X_1(t), X_2(t)),\, t \in \mathbb{R}^N\}$. Moreover,
we have strengthened the conclusions by showing that the convergence is uniform in certain sense.
This will be useful for dealing with sums of local approximations around the
regions where the maximum cross correlation is attained. The details will be illustrated
in the proof of Theorem \ref{theorem_Jordan measurable sets} (see, e.g.,
\eqref{upper bound around maximum 2}, \eqref{lower bound around maximum 2}).
In the following lemmas, $\{X(t), \, t \in \R^N\}$  is a bivariate Gaussian random field as
defined in Section \ref{Introduction}.

\begin{lemma}
\label{main lemma}
Let $s_u$ and $t_u$ be two $\mathbb{R}^N$-valued functions of $u$ and
let $\tau_u:=t_u-s_u$. For any compact sets  $\mathbb{S}$ and
$\mathbb{T}$ in $\mathbb{R}^N$, we have
\begin{equation}
\label{local double extremes asymptotics}
\begin{split}
&\mathbb{P}\bigg(\max_{s\in s_u+u^{-2/\alpha_1}\mathbb{S}} X_1(s)>u, \max_{t\in t_u+u^{-2/\alpha_2}\mathbb{T}} X_2(t)>u\bigg)\\
&=\frac{(1+\rho)^2}{2\pi \sqrt{1-\rho^2}}H_{\alpha_1}\left(\frac{c_1^{1/\alpha_1}\mathbb{S}}{(1+\rho)^{\frac{2}{\alpha_1}}}\right)H_{\alpha_2}\left(\frac{c_2^{1/\alpha_2}\mathbb{T}}{(1+\rho)^{\frac{2}{\alpha_2}}}\right)\\
& \qquad \times u^{-2} \exp\left(-\frac{u^2}{1+r(|\tau_u|)}\right)\, (1+o(1)),
\end{split}
\end{equation}
where $o(1)\rightarrow 0$ uniformly w.r.t. $\tau_u$ satisfying $|\tau_u|\leq C_0\sqrt{\log u}/u$ as $u \rightarrow \infty$.
\end{lemma}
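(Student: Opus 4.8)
The plan is to run the Pickands local analysis by conditioning on the pair of central values $(X_1(s_u), X_2(t_u))$, which under the assumptions of Section \ref{Introduction} is a mean-zero bivariate Gaussian vector with unit variances and correlation $r(|\tau_u|)$; this is the bivariate-field analogue of conditioning on one representative point per interval in Lemma $1$ of \cite{Ladneva_Piterbarg_2000}. By the law of total probability,
\[
\mathbb{P}\Big(\max_{s\in s_u + u^{-2/\alpha_1}\mathbb{S}}X_1(s) > u,\ \max_{t\in t_u + u^{-2/\alpha_2}\mathbb{T}}X_2(t) > u\Big) = \int_{\R^2} G_u(x,y)\, p_u(x,y)\, dx\, dy,
\]
where $p_u$ is the density of $(X_1(s_u), X_2(t_u))$ and $G_u(x,y)$ is the conditional probability of the two excursion events given $X_1(s_u) = x$, $X_2(t_u) = y$. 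First I would substitute $x = u - w_1/u$, $y = u - w_2/u$ (Jacobian $u^{-2}$). Expanding the quadratic form in $p_u$ and using that its exponent is maximized on $\{x\ge u, y\ge u\}$ at the corner $(u,u)$ gives $\frac{x^2 - 2r x y + y^2}{2(1-r^2)} = \frac{u^2 - (w_1+w_2)}{1+r} + o(1)$ with $r = r(|\tau_u|)$, so that $p_u$ peels off the exponential $\exp(-u^2/(1+r(|\tau_u|)))$, the constant $\frac{1}{2\pi\sqrt{1-r(|\tau_u|)^2}}$ and a residual weight $\exp((w_1+w_2)/(1+r(|\tau_u|)))(1+o(1))$. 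Since $|r(|\tau_u|)-\rho| \le \tfrac12|r''(0)||\tau_u|^2(1+o(1)) = O((\log u)/u^2)$ on the range $|\tau_u|\le C_0\sqrt{\log u}/u$, every $\tau_u$-dependent constant that is not multiplied by $u^2$ (the $\sqrt{1-r^2}$ and the $(1+r)$ inside the residual weight) may be replaced by its value at $\rho$ up to a uniform factor $1+o(1)$; the exponential $\exp(-u^2/(1+r(|\tau_u|)))$ must be kept exactly, because $u^2(r(|\tau_u|)-\rho)$ can be of order $\log u$ and hence need not be negligible. This is exactly why the statement carries $r(|\tau_u|)$ in the exponent while claiming uniformity over $|\tau_u|\le C_0\sqrt{\log u}/u$.

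The core is the behaviour of $G_u$. Writing $\zeta_i^{(u)}$ for the rescaled conditional fields $u(X_i(\cdot)-u)$ on the relevant neighbourhoods, conditioned on the two central values, I would read off the conditional means and (cross-)covariances from the $2\times2$ Gaussian regression formula. Assumption i) yields conditional mean tending to $-c_i|\cdot|^{\alpha_i}/(1+\rho) - w_i$ and conditional covariance tending to $c_i(|\tilde s|^{\alpha_i} + |\tilde s'|^{\alpha_i} - |\tilde s - \tilde s'|^{\alpha_i})$, i.e. $c_i$ times the covariance of $\chi$; the factor $(1+\rho)^{-1}$ in the drift is produced by the regression on the cross-correlated coordinate, whose covariance tends to $\rho$ by Assumption iv). The conditional cross-covariance between the two rescaled fields equals, after multiplication by $u^2$, a mixed second difference of $g(h):=r(|h|)$, namely $g(\tau_u - q_1 + q_2) - g(\tau_u - q_1) - g(\tau_u + q_2) + g(\tau_u)$ with $q_i = O(u^{-2/\alpha_i})$, which is $O(u^{2 - 2/\alpha_1 - 2/\alpha_2})\to 0$ since $\alpha_1,\alpha_2\in(0,2)$; hence the two limiting fields are independent. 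This is precisely the uniform convergence of finite-dimensional distributions recorded in Lemma \ref{uniformly conditional convergence}, and together with the uniform control of increments $\mathbb{E}[(\zeta_i^{(u)}(\tilde s) - \zeta_i^{(u)}(\tilde s'))^2]\approx 2c_i|\tilde s - \tilde s'|^{\alpha_i}$ (giving tightness) it upgrades to functional convergence, so that, uniformly in $\tau_u$,
\[
G_u(u - w_1/u,\, u - w_2/u) \to \prod_{i=1}^2 \mathbb{P}\Big(\sup\big(\chi_i(\cdot) - |\cdot|^{\alpha_i}\big) > \tfrac{w_i}{1+\rho}\Big).
\]
Here the self-similarity of $\chi_i$ (the substitution turning $\sqrt{c_i}\,\chi_i(\tilde s) - \tfrac{c_i}{1+\rho}|\tilde s|^{\alpha_i}$ into $(1+\rho)(\chi_i(\hat s) - |\hat s|^{\alpha_i})$) forces the suprema to be taken over $\tfrac{c_1^{1/\alpha_1}\mathbb{S}}{(1+\rho)^{2/\alpha_1}}$ and $\tfrac{c_2^{1/\alpha_2}\mathbb{T}}{(1+\rho)^{2/\alpha_2}}$, which reproduces exactly the scaled sets appearing in \eqref{local double extremes asymptotics}.

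Finally I would pass to the limit inside the $(w_1,w_2)$-integral. The integrand is dominated uniformly in $u$ and $\tau_u$: for $w_i$ bounded above use $G_u\le 1$ with the integrable weight $e^{w_i/(1+r)}$, and for large $w_i$ use Borell's inequality on the conditional fields (whose means and variances are bounded uniformly) to get Gaussian-in-$w_i$ decay of $G_u$ that swamps $e^{w_i/(1+r)}$. Dominated convergence then factorizes the integral, and the substitution $x_i = w_i/(1+\rho)$ converts each factor $\int e^{w_i/(1+\rho)}\mathbb{P}(\sup(\chi_i - |\cdot|^{\alpha_i}) > w_i/(1+\rho))\,dw_i$ into $(1+\rho)H_{\alpha_i}(\cdot)$ via the definition \eqref{H_alpha(S,T)}. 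Collecting the prefactor $\frac{1}{2\pi\sqrt{1-\rho^2}}$, the Jacobian $u^{-2}$, the two resulting factors of $(1+\rho)$ and the retained exponential yields \eqref{local double extremes asymptotics}. I expect the main obstacle to be the last two steps carried out uniformly in $\tau_u$: producing a single dominating function valid across the whole range $|\tau_u|\le C_0\sqrt{\log u}/u$, and upgrading the conditional convergence from finite-dimensional to functional form uniformly in $\tau_u$. The constraint $\alpha_1,\alpha_2<2$ is what makes this work, since it forces each $\tau_u$-dependent error term (a power $u^{\theta}$ with $\theta<0$, times a power of $\sqrt{\log u}$) to vanish uniformly.
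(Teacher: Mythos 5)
Your proposal is correct and follows essentially the same route as the paper: conditioning on $(X_1(s_u),X_2(t_u))$, the substitution $x=u-w/u$ with Jacobian $u^{-2}$, Gaussian regression showing the rescaled conditional fields converge uniformly in $\tau_u$ to independent drifted fractional Brownian motions (the paper's Lemma \ref{uniformly conditional convergence}), self-similarity to produce the scaled Pickands functionals, and dominated convergence with a Borell-type bound. The only cosmetic difference is that you bound the conditional cross-covariance by a mixed second difference of $r(|\cdot|)$ of order $u^{2-2/\alpha_1-2/\alpha_2}$, while the paper uses first-difference (mean value) estimates of order $\sqrt{\log u}\,u^{1-2/\alpha_i}$; both vanish, and the obstacle you flag (uniformity of the functional convergence in $\tau_u$) is exactly what the paper resolves by a discretization argument.
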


\begin{lemma}
\label{double double local extremes lemma}
Let $s_u,\, t_u$ and $\tau_u$ be the same as in Lemma \ref{main lemma}.
For all $T>0$, $\mathbf{m},\mathbf{n}\in \mathbb{Z}^N$, we have
\begin{align}
\label{double double local extremes asymptotics}
&\mathbb{P}\bigg(\max_{s\in s_u+u^{-2/\alpha_1}[0,T]^N} X_1(s)>u, \, \max_{t\in t_u+u^{-2/\alpha_2}[0,T]^N} X_2(t)>u, \nonumber\\
&\qquad  \  \max_{s\in s_u+u^{-2/\alpha_1}[\mathbf{m}T,(\mathbf{m}+1)T]}X_1(s)>u,\, \max_{t\in t_u+u^{-2/\alpha_2}[\mathbf{n}T,(\mathbf{n}+1)T]} X_2(t)>u\bigg)\nonumber\\
&=\frac{(1+\rho)^2}{2\pi \sqrt{1-\rho^2}\, u^{2}}\, e^{-\frac{u^2}{1+r(|\tau_u|)}}\,H_{\alpha_1}\bigg(\frac{c_1^{1/\alpha_1}[0,T]^N}{(1+\rho)^{\frac{2}{\alpha_1}}},\frac{c_1^{1/\alpha_1}[\mathbf{m}T,(\mathbf{m}+1)T]}{(1+\rho)^{\frac{2}{\alpha_1}}}\bigg)\nonumber\\
&\qquad  \qquad \times\, H_{\alpha_2}\bigg(\frac{c_2^{1/\alpha_2}[0,T]^N}{(1+\rho)^{\frac{2}{\alpha_2}}},\frac{c_2^{1/\alpha_2}[\mathbf{n}T,(\mathbf{n}+1)T]}{(1+\rho)^{\frac{2}{\alpha_2}}}\bigg)\,\big(1+o(1)\big),
\end{align}
where $H_\alpha(\cdot,\cdot)$ is defined in \eqref{H_alpha(S,T)} and $o(1)\rightarrow 0$  uniformly for all $s_u$ and $t_u$ that satisfy  $|\tau_u|\leq C_0\sqrt{\log u}/u$  as $u \rightarrow \infty$.
\end{lemma}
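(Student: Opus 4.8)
The plan is to follow the proof of Lemma~\ref{main lemma} almost verbatim, the only structural change being that each coordinate field is now required to exceed the level $u$ on \emph{two} rescaled cubes instead of one. This single change is what converts the single--set Pickands functionals appearing in \eqref{local double extremes asymptotics} into the two--set functionals $H_{\alpha_i}(\cdot,\cdot)$ of \eqref{H_alpha(S,T)}. Write $r=r(|\tau_u|)$ and abbreviate the four rescaled index sets as
\[
\begin{gathered}
\mathbb{S}_1=\frac{c_1^{1/\alpha_1}[0,T]^N}{(1+\rho)^{2/\alpha_1}},\qquad \mathbb{S}_2=\frac{c_1^{1/\alpha_1}[\mathbf{m}T,(\mathbf{m}+1)T]}{(1+\rho)^{2/\alpha_1}},\\
\mathbb{T}_1=\frac{c_2^{1/\alpha_2}[0,T]^N}{(1+\rho)^{2/\alpha_2}},\qquad \mathbb{T}_2=\frac{c_2^{1/\alpha_2}[\mathbf{n}T,(\mathbf{n}+1)T]}{(1+\rho)^{2/\alpha_2}},
\end{gathered}
\]
so that the target of the computation is $H_{\alpha_1}(\mathbb{S}_1,\mathbb{S}_2)\,H_{\alpha_2}(\mathbb{T}_1,\mathbb{T}_2)$.

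First I would condition on the two reference values $X_1(s_u)$ and $X_2(t_u)$, whose joint density is centered bivariate Gaussian with correlation $r$. Substituting $X_1(s_u)=u+p/u$, $X_2(t_u)=u+q/u$ and expanding the quadratic form of the density around $(u,u)$ produces the factor $\frac{1}{2\pi\sqrt{1-r^2}}\,u^{-2}\exp(-u^2/(1+r))$, together with a weight $\exp(-(p+q)/(1+r))$ and the Lebesgue element $dp\,dq$; this already accounts for the prefactor and the exponential rate $e^{-u^2/(1+r(|\tau_u|))}$ in \eqref{double double local extremes asymptotics}. The remaining factor is the conditional probability that all four maxima exceed $u$ given the two reference values.

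Next I would rescale the fields by setting $s=s_u+u^{-2/\alpha_1}v$ and $t=t_u+u^{-2/\alpha_2}w$ and studying $u(X_i(\cdot)-u)$ under the conditioning. By Assumption~i) the conditional law of $u\big(X_1(s_u+u^{-2/\alpha_1}v)-X_1(s_u)\big)$ converges to the rescaled fractional Brownian field $\chi_1$ with a deterministic drift coming from the local variance decay, and after absorbing $c_1$ and $(1+\rho)$ into the index sets this drift normalizes to $-|v|^{\alpha_1}$, matching \eqref{H_alpha(S,T)}; the same holds for $X_2$, $\chi_2$ and $-|w|^{\alpha_2}$. Because $r$ is twice continuously differentiable with $r'(0)=0$ by Assumption~iv), the residual conditional cross--correlation between the $X_1$-- and $X_2$--fluctuations at the local scale vanishes, so in the limit $\chi_1$ and $\chi_2$ are independent and the conditional probability factorizes. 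For $X_1$ the limiting event is that $\chi_1(v)-|v|^{\alpha_1}$ exceeds a common $p$--dependent threshold both for some $v\in\mathbb{S}_1$ and for some $v\in\mathbb{S}_2$, with the \emph{same} field $\chi_1$ on both cubes measured from the common origin $s_u$; integrating the weight against this probability and recognizing the definition \eqref{H_alpha(S,T)} yields $H_{\alpha_1}(\mathbb{S}_1,\mathbb{S}_2)$, and symmetrically $H_{\alpha_2}(\mathbb{T}_1,\mathbb{T}_2)$ for $X_2$.

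The hard part will be the uniformity hidden in the $(1+o(1))$. Note that on the range $|\tau_u|\le C_0\sqrt{\log u}/u$ the quantity $u^2|\tau_u|^2$ may grow like $\log u$, so $r(|\tau_u|)$ cannot be replaced by $\rho$ inside the exponential — this is exactly why \eqref{double double local extremes asymptotics} retains $r(|\tau_u|)$ and demands uniformity in $\tau_u$. I would obtain the uniform control of the conditional limit from the uniform convergence of the conditional finite--dimensional distributions supplied by Lemma~\ref{uniformly conditional convergence}. The more delicate point is to upgrade that finite--dimensional convergence to convergence of the four suprema over the cubes: this requires an oscillation/tightness estimate for the rescaled conditional fields that is uniform in $\tau_u$, so that the dominated--convergence argument passing the limit inside the $dp\,dq$ integral goes through uniformly. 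The separation of the two cubes for large $\mathbf{m},\mathbf{n}$ causes no difficulty, since $T,\mathbf{m},\mathbf{n}$ are fixed and the standard Gaussian tail bound guarantees the relevant integrals converge.
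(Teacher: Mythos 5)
Your argument is essentially sound, but it takes a genuinely different route from the paper's. You re-run the whole conditioning machinery of Lemma \ref{main lemma} for the four-supremum event: condition on $(X_1(s_u),X_2(t_u))$, prove uniform convergence of the conditional law of the rescaled fields to the independent pair $(\xi,\eta)$ of \eqref{bivariate FBM}, pass the limit through the suprema over all four cubes, and integrate the Gaussian weight to read off the two-set functionals directly from the definition \eqref{H_alpha(S,T)}. That works, but it obliges you to restate and reprove Lemma \ref{uniformly conditional convergence} for intersections of four maxima (the discretization/Dudley entropy argument and the dominating function both go through, since the four-event conditional probability is dominated by the two-event one, but this extension has to be written out --- as stated, \eqref{cond maximum unif convergence} covers only one supremum per coordinate field, so you cannot merely cite it). The paper instead avoids any new conditioning computation: writing $A,B$ for the two $X_1$-events and $C,D$ for the two $X_2$-events, it expands $\mathbb{P}(A\cap B\cap C\cap D)$ by inclusion--exclusion into nine probabilities of the form $\mathbb{P}(E\cap F)$ with $E\in\{A,\,B,\,A\cup B\}$ and $F\in\{C,\,D,\,C\cup D\}$, each of which is exactly the situation already covered by Lemma \ref{main lemma} (which holds for arbitrary compact sets, in particular unions of two cubes, with the required uniformity in $\tau_u$ built in). The identity $H_\alpha(\mathbb{S})+H_\alpha(\mathbb{T})-H_\alpha(\mathbb{S}\cup\mathbb{T})=H_\alpha(\mathbb{S},\mathbb{T})$, proved by the elementary computation $\mathbb{E}\big(e^X\big)+\mathbb{E}\big(e^Y\big)-\mathbb{E}\big(e^{\max(X,Y)}\big)=\mathbb{E}\big(e^{\min(X,Y)}\big)$, then collapses the nine terms into the product of the two two-set functionals. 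Your approach buys a self-contained derivation of $H_{\alpha_i}(\cdot,\cdot)$ straight from its definition; the paper's buys brevity and inherits the uniform error control from Lemma \ref{main lemma} without further work.
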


Proofs of Lemmas \ref{main lemma} and \ref{double double local extremes lemma} will be given in Section 4. 
Now we proceed to prove our main theorems.
\begin{proof}[Proof of Theorem \ref{theorem_Jordan measurable sets}]
Let $\Pi=A_1\times A_2,\ \delta(u)=C\sqrt{\log u}/u$,
where $C$ is a constant whose value will be determined later. Let
\begin{align}
\label{domain D}
\mathcal{D}= \big\{(s,t)\in \Pi:  |t-s|\leq \delta(u) \big\}.
\end{align}
Since
\begin{align}
&\mathbb{P}\bigg(\bigcup_{(s,t)\in \mathcal{D}} \{X_1(s)>u, X_2(t)>u)\}\bigg)
\leq \mathbb{P}\bigg(\max_{s\in A_1} X_1(s)>u,\, \max_{t\in A_2} X_2(t)>u\bigg)\nonumber\\
& \leq \mathbb{P}\bigg(\bigcup_{(s,t)\in \mathcal{D}} \{X_1(s)>u,  X_2(t)>u)\}\bigg)
+\mathbb{P}\bigg(\bigcup_{(s,t)\in\Pi\setminus \mathcal{D}} \{X_1(s)>u,  X_2(t)>u)\}\bigg),\nonumber
\end{align}
it is sufficient to prove that, by choosing appropriate constant $C$, we have
\begin{equation}
\label{around maximum}
\begin{split}
&\mathbb{P}\bigg(\bigcup_{(s,t)\in \mathcal{D}} \{X_1(s)>u, X_2(t)>u)\}\bigg) \\
&=(2\pi)^{\frac{N}{2}}(-r''(0))^{-\frac{N}{2}}c_1^{\frac{N}{\alpha_1}}c_2^{\frac{N}{\alpha_2}}(1+\rho)^{-N(\frac{2}{\alpha_1}+\frac{2}{\alpha_2}-1)}
mes_N(A_1\cap A_2) \\
&\qquad \times  H_{\alpha_1}H_{\alpha_2} \, u^{N(\frac{2}{\alpha_1}+\frac{2}{\alpha_2}-1)}\Psi(u,\rho)(1+o(1)),\ \ \text{ as }   u \rightarrow \infty
\end{split}
\end{equation}
and
\begin{eqnarray}
\label{off maximum}
\lim_{u \rightarrow \infty}\frac{\mathbb{P}\left(\bigcup_{(s,t)\in\Pi\setminus \mathcal{D}}
\{X_1(s)>u, X_2(t)>u)\}\right)}{\mathbb{P}\left(\bigcup_{(s,t)\in \mathcal{D}} \{X_1(s)>u, X_2(t)>u)\}\right)}=0.
\end{eqnarray}

We prove \eqref{around maximum} first. For any fixed $T>0$ and
$i=1,2$, let $d_i(u)=Tu^{-\frac{2}{\alpha_i}}$ and, for any $\mathbf{k}
=(k_1,\ldots,k_N)\in \mathbb{Z}^N$, define
\begin{eqnarray}
\Delta^{(i)}_\mathbf{k} \triangleq \prod_{j=1}^N[k_jd_i(u),(k_j+1)d_i(u)]
= [\mathbf{k}d_i(u),(\mathbf{k}+1)d_i(u)].
\end{eqnarray}
Let
\begin{equation}
\label{mathcal C}
\mathcal{C}=\{(\mathbf{k} ,\mathbf{l}): \Delta^{(1)}_\mathbf{k}\times \Delta^{(2)}_\mathbf{l} \cap \mathcal{D}\neq \emptyset \}\ \text{ and }\
\mathcal{C}^\circ =\{(\mathbf{k},\mathbf{l}):\Delta^{(1)}_\mathbf{k}\times \Delta^{(2)}_\mathbf{l} \subseteq \mathcal{D} \}.
\end{equation}
It is easy to see that
$$\bigcup_{(\mathbf{k},\mathbf{l})\in \mathcal{C}^\circ}\Delta^{(1)}_\mathbf{k}\times \Delta^{(2)}_\mathbf{l}\subseteq \mathcal{D} \subseteq \bigcup_{(\mathbf{k},\mathbf{l})\in \mathcal{C}}\Delta^{(1)}_\mathbf{k}\times \Delta^{(2)}_\mathbf{l}. $$
Thus the LHS of \eqref{around maximum} is bounded above by
\begin{equation}
\label{upper bound around maximum}
\begin{split}
&\mathbb{P}\bigg(\bigcup_{(s,t)\in \mathcal{D}} \{X_1(s)>u,X_2(t)>u)\}\bigg)\\
&\leq \sum_{(\mathbf{k},\mathbf{l})\in \mathcal{C}}\mathbb{P}\bigg(\max_{s\in \Delta^{(1)}_\mathbf{k}} X_1(s)>u, \max_{t\in \Delta^{(2)}_\mathbf{l} } X_2(t)>u\bigg)\\
&=  \sum_{(\mathbf{k},\mathbf{l})\in \mathcal{C}}\mathbb{P}\bigg(\max_{s\in \mathbf{k}d_1(u)+\Delta^{(1)}_0} X_1(s)>u, \max_{t\in \mathbf{l}d_2(u)+\Delta^{(2)}_0 } X_2(t)>u\bigg).
\end{split}
\end{equation}
Let
\begin{equation}
\begin{split}
\tau_{\mathbf{k}\mathbf{l}}&:=\mathbf{l}d_2(u)-\mathbf{k}d_1(u)\\
&\ =(l_1d_2(u)-k_1d_1(u),...,l_Nd_2(u)-k_Nd_1(u)).
\label{def_tau_kl}
\end{split}
\end{equation}
For $(\mathbf{k},\mathbf{l})\in \mathcal{C}$, $|\tau_{\mathbf{k}\mathbf{l}}|\leq \delta(u)
+\sqrt{N}(d_1(u)+d_2(u))\leq 2\delta(u)$ for all $u$ large enough, since $d_1(u)=o(\delta(u))$ and $d_2(u)=o(\delta(u))$, as $u\rightarrow \infty$.
By applying Lemma \ref{main lemma} to the RHS of \eqref{upper bound around maximum},  we obtain
\begin{equation}
\label{upper bound around maximum 2}
\begin{split}
&\mathbb{P}\bigg(\bigcup_{(s,t)\in \mathcal{D}} \{X_1(s)>u,X_2(t)>u)\}\bigg)\\
&\leq \frac{(1+\rho)^2(1+\gamma(u))}{2\pi \sqrt{1-\rho^2}\, u^{2}}H_{\alpha_1}\left(\frac{c_1^{1/\alpha_1}[0,T]^N}{(1+\rho)^{\frac{2}{\alpha_1}}}\right)H_{\alpha_2}\left(\frac{c_2^{1/\alpha_2}[0,T]^N}{(1+\rho)^{\frac{2}{\alpha_2}}}\right)\\
&\qquad \ \times \sum_{(\mathbf{k},\mathbf{l})\in \mathcal{C}}\exp\left(-\frac{u^2}{1+r(|\tau_{\mathbf{k}\mathbf{l}}|)}\right)\\
&=H_{\alpha_1}\left(\frac{c_1^{1/\alpha_1}[0,T]^N}{(1+\rho)^{\frac{2}{\alpha_1}}}\right)H_{\alpha_2}\left(\frac{c_2^{1/\alpha_2}[0,T]^N}{(1+\rho)^{\frac{2}{\alpha_2}}}\right) \Psi(u,\rho)(1+\gamma(u))\\
&\ \ \qquad \times \sum_{(\mathbf{k},\mathbf{l})\in \mathcal{C}} \exp\left\{-u^2\left(\frac{1}{1+r(|\tau_{\mathbf{k}\mathbf{l}}|)}-\frac{1}{1+\rho}\right)\right\},
\end{split}
\end{equation}
where the global error function $\gamma(u)\to 0$, as $u\rightarrow \infty$. The uniform
convergence of \eqref{local double extremes asymptotics} in Lemma \ref{main lemma} guarantees
that the local error term $o(1)$ for each pair $(\mathbf{k},\mathbf{l})\in \mathcal{C}$ is
uniformly bounded by $\gamma(u)$.

The series in the last equality of \eqref{upper bound around maximum 2} is dealt by the following key lemma,
which gives the power of the threshold $u$  in \eqref{around maximum}.
\begin{lemma}
\label{lem_Riemann Sum}
Recall the set $\mathcal{C}$ defined in \eqref{mathcal C}. Let
\begin{equation}\label{Eq:h}
h(u):=\sum_{(\mathbf{k},\mathbf{l})\in \mathcal{C}} \exp\left\{-u^2\left(\frac{1}
{1+r(|\tau_{\mathbf{k}\mathbf{l}}|)}-\frac{1}{1+\rho}\right)\right\}.
\end{equation}
Then, under the assumptions of Theorem \ref{theorem_Jordan measurable sets}, we have
\begin{align}
\label{h(u)}
h(u)&=(2\pi)^{N/2}(-r''(0))^{-N/2}(1+\rho)^NT^{-2N}mes_N(A_1\cap A_2)\nonumber\\
&\qquad \times u^{N(\frac{2}{\alpha_1}+\frac{2}{\alpha_2}-1)}(1+o(1)),\ \ \text{ as }\ u\rightarrow \infty.
\end{align}
Moreover, if we replace $\mathcal{C}$ in (\ref{Eq:h}) by $\mathcal{C}^\circ$ defined in
\eqref{mathcal C}, then (\ref{h(u)}) still holds.
\end{lemma}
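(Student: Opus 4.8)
The plan is to first reduce the exponent to a clean Gaussian form by Taylor expansion, and then to evaluate the resulting multiscale sum as an iterated Riemann sum, with the boundary effects absorbed by the Jordan measurability of $A_1\cap A_2$.

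First I would exploit the hypotheses $r(0)=\rho$, $r'(0)=0$, $r''(0)<0$ together with the continuity of $r''$ near $0$. Since every pair $(\mathbf{k},\mathbf{l})\in\mathcal{C}$ satisfies $|\tau_{\mathbf{k}\mathbf{l}}|\le 2\delta(u)\to 0$, Taylor's theorem gives, uniformly in such pairs,
$$\frac{1}{1+r(|\tau_{\mathbf{k}\mathbf{l}}|)}-\frac{1}{1+\rho}=\frac{\rho-r(|\tau_{\mathbf{k}\mathbf{l}}|)}{(1+\rho)(1+r(|\tau_{\mathbf{k}\mathbf{l}}|))}=a\,|\tau_{\mathbf{k}\mathbf{l}}|^2\big(1+o(1)\big),\qquad a:=\frac{-r''(0)}{2(1+\rho)^2}>0.$$
Because the $o(1)$ is uniform, for every $\varepsilon>0$ and all $u$ large the exponent is trapped between $(1-\varepsilon)a|\tau_{\mathbf{k}\mathbf{l}}|^2$ and $(1+\varepsilon)a|\tau_{\mathbf{k}\mathbf{l}}|^2$. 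Since every summand is positive, this sandwiches $h(u)$ between the two clean Gaussian sums $\sum_{\mathcal{C}}\exp\{-(1\mp\varepsilon)a\,u^2|\tau_{\mathbf{k}\mathbf{l}}|^2\}$. It therefore suffices to evaluate $S_b(u):=\sum_{(\mathbf{k},\mathbf{l})\in\mathcal{C}}\exp\{-b\,u^2|\tau_{\mathbf{k}\mathbf{l}}|^2\}$ for fixed $b>0$ and then let $\varepsilon\to0$, using that the answer is continuous in $b$.

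To evaluate $S_b(u)$ I would sum iteratively, keeping the three relevant scales in view: the Gaussian width $u^{-1}$, the grid spacings $d_i(u)=Tu^{-2/\alpha_i}$, and the cutoff $\delta(u)=C\sqrt{\log u}/u$. Because $\alpha_i\in(0,2)$ we have $d_i(u)=o(u^{-1})$, so the $t$-grid is far finer than the Gaussian width, while $\delta(u)\gg u^{-1}$, so the restriction $|\tau|\le\delta(u)$ discards only a tail of vanishing relative mass (of order $u^{-bC^2}$ up to logarithmic factors). Fixing $\mathbf{k}$, hence $s=\mathbf{k}d_1(u)$, and summing over $\mathbf{l}$, the inner sum is a Riemann sum for $d_2(u)^{-N}\int e^{-bu^2|t-s|^2}\,dt$; for $s$ in the interior of $A_1\cap A_2$ the integral captures the full Gaussian mass and equals $d_2(u)^{-N}\,\pi^{N/2}(bu^2)^{-N/2}(1+o(1))$. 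Summing this constant inner value over the $\mathbf{k}$ with $\mathbf{k}d_1(u)\in A_1\cap A_2$ contributes a factor $mes_N(A_1\cap A_2)/d_1(u)^N$, yielding
$$S_b(u)=\frac{\pi^{N/2}}{b^{N/2}}\,\frac{mes_N(A_1\cap A_2)}{d_1(u)^N\, d_2(u)^N\,u^N}\,\big(1+o(1)\big).$$
Substituting $d_i(u)^N=T^Nu^{-2N/\alpha_i}$ and $b=a$, and using $\pi^{N/2}a^{-N/2}=(2\pi)^{N/2}(1+\rho)^N(-r''(0))^{-N/2}$, gives exactly \eqref{h(u)}.

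The step I expect to be the main obstacle is making this iterated Riemann-sum passage rigorous and uniform, since the summand depends on $u$ and concentrates at scale $u^{-1}$ while living on two incompatible grids of spacings $d_1(u)$ and $d_2(u)$. The crucial inputs are (i) the multiscale ordering $d_i(u)\ll u^{-1}\ll\delta(u)$, which guarantees simultaneously that the grid resolves the Gaussian bump and that the cutoff loses negligible mass; and (ii) Jordan measurability of $A_1$ and $A_2$, hence of $A_1\cap A_2$, which forces the $\delta(u)$-neighborhood of $\partial(A_1\cap A_2)$ to have volume tending to $0$, so that the boundary cubes — those $\mathbf{k}$ with $s$ within $\delta(u)$ of $\partial(A_1\cap A_2)$, where the inner Gaussian mass is only partially captured — number $o(d_1(u)^{-N})$ and are negligible. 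These same two ingredients show that replacing $\mathcal{C}$ by $\mathcal{C}^\circ$ alters the sum only through a boundary layer of $\mathcal{D}$, on which the summand is either exponentially small (near $|\tau|=\delta(u)$) or indexed by a negligible set; hence $S_b(u)$, and therefore $h(u)$, is unchanged at leading order, which proves the final assertion for both $\mathcal{C}$ and $\mathcal{C}^\circ$.
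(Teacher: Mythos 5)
Your proposal is correct and follows essentially the same route as the paper's proof: a uniform Taylor expansion of $1/(1+r(|\tau|))$ to sandwich $h(u)$ between the pure Gaussian sums $\sum_{\mathcal{C}}e^{-(1\mp\varepsilon)a u^2|\tau_{\mathbf{k}\mathbf{l}}|^2}$ with $a=\frac{-r''(0)}{2(1+\rho)^2}$, followed by a Riemann-sum identification of those sums with $(\pi/a)^{N/2}\,mes_N(A_1\cap A_2)\,/\,\big(u^N d_1^N(u) d_2^N(u)\big)$. The only cosmetic difference is that you evaluate the Riemann sum iteratively (inner Gaussian sum over $\mathbf{l}$, then a lattice-point count over $\mathbf{k}$), whereas the paper compares the double sum directly with the $2N$-dimensional integral via cell-wise min/max bounds $p(u)\leq\cdot\leq q(u)$ and a single oscillation estimate for $q(u)-p(u)$; both arguments rest on the same multiscale ordering $d_i(u)\ll u^{-1}\ll\delta(u)$ and on Jordan measurability to dispose of boundary cells, including for the passage from $\mathcal{C}$ to $\mathcal{C}^\circ$.
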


We defer the proof of Lemma \ref{lem_Riemann Sum} to Section \ref{sec_proof of lemmas}
and continue with the proof of Theorem \ref{theorem_Jordan measurable sets}.
Applying (\ref{h(u)}) to \eqref{upper bound around maximum 2}, we obtain
\begin{equation}
\label{upper bound around maximum 5}
\begin{split}
&\mathbb{P}\bigg(\bigcup_{(s,t)\in \mathcal{D}} \{X_1(s)>u, X_2(t)>u)\}\bigg)\\
&\leq (2\pi)^{\frac{N}{2}}(-r''(0))^{-\frac{N}{2}}(1+\rho)^NT^{-2N}mes_N(A_1\cap A_2)
H_{\alpha_1}\left(\frac{c_1^{1/\alpha_1}[0,T]^N}{(1+\rho)^{\frac{2}{\alpha_1}}}\right)\\
&\ \ \ \times  H_{\alpha_2}\left(\frac{c_2^{1/\alpha_2}[0,T]^N}{(1+\rho)^{\frac{2}{\alpha_2}}}\right)\,
u^{N(\frac{2}{\alpha_1}+\frac{2}{\alpha_2}-1)}\Psi(u,\rho)(1+\gamma_1(u)),
\end{split}
\end{equation}
where $\gamma_1(u)\to 0$, as $u\rightarrow \infty$. Hence,
\begin{equation}
\label{upper bound around maximum 3}
\begin{split}
&\limsup_{u\rightarrow \infty}\frac{\mathbb{P}\left(\bigcup_{(s,t)\in \mathcal{D}}
\{X_1(s)>u, X_2(t)>u)\}\right)}{u^{N(\frac{2}{\alpha_1}+\frac{2}{\alpha_2}-1)}\Psi(u,\rho)}\\
&\leq (2\pi)^{\frac{N}{2}}(-r''(0))^{-\frac{N}{2}}(1+\rho)^N mes_N(A_1\cap A_2)\\
&\qquad \times T^{-2N}H_{\alpha_1}\left(\frac{c_1^{1/\alpha_1}[0,T]^N}{(1+\rho)^{\frac{2}{\alpha_1}}}\right)H_{\alpha_2}\left(\frac{c_2^{1/\alpha_2}[0,T]^N}{(1+\rho)^{\frac{2}{\alpha_2}}}\right).
\end{split}
\end{equation}
The above inequality holds for every $T>0$. Therefore, letting $T \rightarrow \infty$, we have
\begin{align}
\label{upper bound around maximum 4}
&\limsup_{u\rightarrow \infty}\frac{\mathbb{P}\left(\bigcup_{(s,t)\in \mathcal{D}} \{X_1(s)>u, X_2(t)>u)\}\right)}{u^{N(\frac{2}{\alpha_1}+\frac{2}{\alpha_2}-1)}\Psi(u,\rho)}
\leq (2\pi)^{\frac{N}{2}}(-r''(0))^{-\frac{N}{2}} \nonumber\\
&\quad\qquad  \times c_1^{\frac{N}{\alpha_1}}c_2^{\frac{N}{\alpha_2}}(1+\rho)^{-N(\frac{2}{\alpha_1}+\frac{2}{\alpha_2}-1)}mes_N(A_1\cap A_2)H_{\alpha_1}H_{\alpha_2}.
\end{align}
On the other hand, the lower bound for LHS of \eqref{around maximum} can be derived as follows. Let
\begin{equation}
\mathcal{B}=\{(\mathbf{k},\mathbf{l},\mathbf{k}',\mathbf{l}'):\, (\mathbf{k},\mathbf{l})\neq(\mathbf{k}',\mathbf{l}'), (\mathbf{k},\mathbf{l}),(\mathbf{k}',\mathbf{l}')\in \mathcal{C}\}.
\end{equation}
By Bonferroni's inequality and symmetric property of $\mathcal{B}$, the LHS of \eqref{around maximum} is bounded below by
\begin{align}
\label{lower bound around maximum}
&\mathbb{P}\bigg(\bigcup_{(s,t)\in \mathcal{D}} \{X_1(s)>u, X_2(t)>u\}\bigg)\nonumber\\
&\geq\sum_{(\mathbf{k},\mathbf{l})\in \mathcal{C}^\circ}\mathbb{P}\bigg(\max_{s\in \Delta^{(1)}_\mathbf{k}} X_1(s)>u,\, \max_{t\in \Delta^{(2)}_\mathbf{l} } X_2(t)>u\bigg)\nonumber\\
& \qquad -\frac{1}{2}\sum_{(\mathbf{k},\mathbf{l},\mathbf{k}',\mathbf{l}')\in \mathcal{B}}\mathbb{P}\bigg(\max_{s\in \Delta^{(1)}_\mathbf{k}} X_1(s)>u,  \max_{t\in \Delta^{(2)}_\mathbf{l} } X_2(t)>u,  \\
&\qquad \qquad \qquad \max_{s\in \Delta^{(1)}_{\mathbf{k}'}} X_1(s)>u, \, \max_{t\in \Delta^{(2)}_{\mathbf{l}'} } X_2(t)>u\bigg)\nonumber\\
&\triangleq\Sigma_1-\Sigma_2. \nonumber
\end{align}
Since $\mathcal{C}^\circ$ and ${\mathcal{C}}$ are almost the same, a similar argument as in
 \eqref{upper bound around maximum 2}$\sim$ \eqref{upper bound around maximum 4} shows
 that $\Sigma_1$ is bounded below by
\begin{align}
\Sigma_1&\geq(2\pi)^{\frac{N}{2}}(-r''(0))^{-\frac{N}{2}}(1+\rho)^N mes_N(A_1\cap A_2)T^{-2N}H_{\alpha_1}\left(\frac{c_1^{1/\alpha_1}[0,T]^N}{(1+\rho)^{\frac{2}{\alpha_1}}}\right)\nonumber\\
&\qquad  \ \times H_{\alpha_2}\left(\frac{c_2^{1/\alpha_2}[0,T]^N}{(1+\rho)^{\frac{2}{\alpha_2}}}\right)\, u^{N(\frac{2}{\alpha_1}+\frac{2}{\alpha_2}-1)}\Psi(u,\rho)(1-\gamma_2(u)),
\end{align}
where $\gamma_2(u)\to 0$, as $u\rightarrow \infty$.
Hence, letting $T\rightarrow \infty$, we have
\begin{equation}
\label{lower bound around maximum 1}
\begin{split}
&\liminf_{u\rightarrow \infty}\frac{\Sigma_1}{u^{N(\frac{2}{\alpha_1}+\frac{2}{\alpha_2}-1)}\Psi(u,\rho)}\
\geq (2\pi)^{\frac{N}{2}}(-r''(0))^{-\frac{N}{2}}c_1^{\frac{N}{\alpha_1}}c_2^{\frac{N}{\alpha_2}}\\
&\qquad \qquad \times (1+\rho)^{-N(\frac{2}{\alpha_1}+\frac{2}{\alpha_2}-1)}mes_N(A_1\cap A_2)H_{\alpha_1}H_{\alpha_2}.
\end{split}
\end{equation}
Next, we consider $\Sigma_2$ in (\ref{lower bound around maximum}). To simplify the notation, we let
\begin{align}
\label{I(k,l,k',l') def}
I(\mathbf{k},\mathbf{l},\mathbf{k}',\mathbf{l}') &:=
\mathbb{P}\bigg(\max_{s\in \Delta^{(1)}_\mathbf{k}} X_1(s)>u,\, \max_{t\in \Delta^{(2)}_\mathbf{l} } X_2(t)>u,\nonumber\\
& \qquad \qquad \qquad \qquad \max_{s\in \Delta^{(1)}_{\mathbf{k}'}} X_1(s)>u, \max_{t\in \Delta^{(2)}_{\mathbf{l}'} } X_2(t)>u\bigg).\nonumber
\end{align}
For $\mathbf{m}=(m_1, \ldots,m_N)\in \mathbb{Z}^N$, let
\begin{equation}
\label{def_mathcal_H(m)}
\mathcal{H}_{\alpha,c}(\mathbf{m})\triangleq H_\alpha\left(\frac{c^{1/\alpha}[0,T]^N}{(1+\rho)^{\frac{2}{\alpha}}},
\frac{c^{1/\alpha}[\mathbf{m}T,(\mathbf{m}+1)T]}{(1+\rho)^{\frac{2}{\alpha}}}\right).
\end{equation}
Rewriting $\Sigma_2$ and applying Lemma \ref{double double local extremes lemma}, we obtain
\begin{align}
\label{lower bound around maximum 2}
\Sigma_2&=\frac{1}{2}\sum_{(\mathbf{k},\mathbf{l})\in \mathcal{C}}\bigg(\sum_{\substack{(\mathbf{k}',\mathbf{l}')\in \mathcal{C}\\\mathbf{k}'=\mathbf{k},\mathbf{l}'\neq \mathbf{l}}}
+\sum_{\substack{(\mathbf{k}',\mathbf{l}')\in \mathcal{C}\\ \mathbf{k}'\neq \mathbf{k},\mathbf{l}'=\mathbf{l}}}
+\sum_{\substack{(\mathbf{k}',\mathbf{l}')\in \mathcal{C}\\\mathbf{k}'\neq \mathbf{k},\mathbf{l}'\neq \mathbf{l}}}\bigg)I(\mathbf{k},\mathbf{l},\mathbf{k}',\mathbf{l}') \nonumber\\
=& \frac{(1+\rho)^2(1+\gamma_3(u))}{4\pi \sqrt{1-\rho^2}\,u^2}\sum_{(\mathbf{k},\mathbf{l})\in \mathcal{C}} e^{-\frac{u^2}{1+r(|\tau_{\mathbf{k}\mathbf{l}}|)}}\, \bigg(\mathcal{H}_{\alpha_1,c_1}(\mathbf{0})\sum_{\substack{(\mathbf{k}',\mathbf{l}')\in \mathcal{C}\\\mathbf{k}'=\mathbf{k},\mathbf{l}'\neq \mathbf{l}}}\mathcal{H}_{\alpha_2,c_2}(\mathbf{l}'-\mathbf{l})\nonumber\\
&+\mathcal{H}_{\alpha_2,c_2}(\mathbf{0})\sum_{\substack{(\mathbf{k}',\mathbf{l}')\in \mathcal{C}\\\mathbf{k}'\neq \mathbf{k},\mathbf{l}'=\mathbf{l}}}\mathcal{H}_{\alpha_1,c_1}(\mathbf{k}'-\mathbf{k})+
\sum_{\substack{(\mathbf{k}',\mathbf{l}')\in \mathcal{C}\\\mathbf{k}'\neq \mathbf{k},\mathbf{l}'\neq \mathbf{l}}}\mathcal{H}_{\alpha_1,c_1}(\mathbf{k}'-\mathbf{k})\mathcal{H}_{\alpha_2,c_2}(\mathbf{l}'-\mathbf{l})\bigg)\nonumber\\
\leq &\frac{(1+\rho)^2(1+\gamma_3(u))}{4\pi \sqrt{1-\rho^2}u^2}\sum_{(\mathbf{k},\mathbf{l})\in \mathcal{C}}e^{-\frac{u^2}{1+r(|\tau_{\mathbf{k}\mathbf{l}}|)}}
\bigg(\mathcal{H}_{\alpha_1,c_1}(\mathbf{0})\sum_{\mathbf{n}\neq \mathbf{0}}\mathcal{H}_{\alpha_2,c_2}(\mathbf{n})\nonumber\\
&+\mathcal{H}_{\alpha_2,c_2}(\mathbf{0})\sum_{\mathbf{m}\neq \mathbf{0}}\mathcal{H}_{\alpha_1,c_1}(\mathbf{m})+\sum_{\mathbf{m}\neq \mathbf{0},\mathbf{n}\neq \mathbf{0}}\mathcal{H}_{\alpha_1,c_1}(\mathbf{m})\mathcal{H}_{\alpha_2,c_2}(\mathbf{n})\bigg),
\end{align}
where $\gamma_3(u)\to 0$, as $u\rightarrow \infty$. According to the uniform convergence of 
\eqref{double double local extremes asymptotics}, the local error term $o(1)$ for each pair 
$(\mathbf{k'},\mathbf{l'})\in \mathcal{C}$ is bounded above by $\gamma_3(u)$ . To estimate 
$\mathcal{H}_{\alpha,c}(\cdot)$, we make use of the following lemma, whose proof is again postponed to Section \ref{sec_proof of lemmas}.
\begin{lemma}
\label{lem_mathcal_H(m)}
Recall $\mathcal{H}_{\alpha,c}(\cdot)$ defined in \eqref{def_mathcal_H(m)}. Let $i_0=
{\rm argmax}_{1\leq i \leq N}|m_i|$. Then  there exist positive constants
$C_1$ and $T_0$ such that for all $T \ge T_0$,
\begin{align}
\label{mathcal_H(0) bounds}
&\mathcal{H}_{\alpha,c}(\mathbf{0})\leq  C_1 T^N;\\
\label{mathcal_H(1) bounds}
&\mathcal{H}_{\alpha,c}(\mathbf{m})\leq C_1 T^{N-\frac{1}{2}},\ \text{when}\ |m_{i_0}|=1;\\
\label{mathcal_H(m) bounds}
&\mathcal{H}_{\alpha,c}(\mathbf{m})\leq C_1 T^{2N}e^{-\frac{c}{8(1+\rho)^2}(|m_{i_0}|-1)^\alpha T^\alpha}, \ \text{when}\ |m_{i_0}|\geq 2.
\end{align}
Consequently,
\begin{equation}
\label{mathcal_H(m) summation bounds}
\sum_{\mathbf{m}\in \mathbb{Z}^N\setminus\{\mathbf{0}\}}\mathcal{H}_{\alpha,c}(\mathbf{m})\leq C_1 T^{N-\frac{1}{2}}.
\end{equation}
\end{lemma}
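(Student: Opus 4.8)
The plan is to reduce $\mathcal{H}_{\alpha,c}(\mathbf{m})$ to the Pickands-type functional over two explicit cubes and then treat three regimes of $\mathbf{m}$ separately. Writing $a := c^{1/\alpha}/(1+\rho)^{2/\alpha}$, the two arguments of $H_\alpha$ in \eqref{def_mathcal_H(m)} are the cube $\mathbb{S} := [0,aT]^N$ and its translate $\mathbb{T} := [a\mathbf{m}T, a(\mathbf{m}+1)T]$, both of side $aT$ and with $\mathbb{T}$ shifted from $\mathbb{S}$ by the vector $a\mathbf{m}T$. A direct inspection of the $i_0$-th coordinate shows $\inf_{s\in\mathbb{S},\,t\in\mathbb{T}}|t-s| \geq a(|m_{i_0}|-1)T$. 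I would establish \eqref{mathcal_H(0) bounds}--\eqref{mathcal_H(m) bounds} according to whether $\mathbf{m}=\mathbf{0}$, $|m_{i_0}|=1$, or $|m_{i_0}|\geq 2$, and then assemble \eqref{mathcal_H(m) summation bounds}.

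For \eqref{mathcal_H(0) bounds} the two cubes coincide, so $\mathcal{H}_{\alpha,c}(\mathbf{0})=H_\alpha([0,aT]^N)$, and the finiteness of the Pickands constant in \eqref{Pickands constant} gives $H_\alpha([0,S]^N)\leq C_0 S^N$ for $S\geq 1$; taking $S=aT$ yields $\mathcal{H}_{\alpha,c}(\mathbf{0})\leq C_1 T^N$. The exponential estimate \eqref{mathcal_H(m) bounds} is the analytic heart. The key observation is that on the joint event both drifted suprema exceed $x$, hence
\begin{equation*}
\Big\{\sup_{s\in\mathbb{S}}(\chi(s)-|s|^\alpha)>x,\ \sup_{t\in\mathbb{T}}(\chi(t)-|t|^\alpha)>x\Big\}\subseteq\Big\{\sup_{(s,t)\in\mathbb{S}\times\mathbb{T}}\zeta(s,t)>2x\Big\},
\end{equation*}
where $\zeta(s,t):=\chi(s)+\chi(t)-|s|^\alpha-|t|^\alpha$. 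Using $\mathbb{E}[\chi(s)\chi(t)]=|s|^\alpha+|t|^\alpha-|t-s|^\alpha$, one computes for a single pair that $\mathbb{E}\exp\big(\tfrac12\zeta(s,t)\big)=\exp\big(-\tfrac14|t-s|^\alpha\big)$, whence, by Fubini,
\begin{equation*}
\int_0^\infty e^x\,\mathbb{P}\big(\zeta(s,t)>2x\big)\,dx\leq \mathbb{E}\big[e^{\zeta(s,t)/2}\big]=e^{-\frac14|t-s|^\alpha}\leq e^{-\frac{a^\alpha}{4}(|m_{i_0}|-1)^\alpha T^\alpha}.
\end{equation*}
To pass from a single pair to the supremum over $\mathbb{S}\times\mathbb{T}$ I would partition the product into unit cubes and apply a Borell--TIS / Piterbarg maximal inequality on each cell; this introduces the polynomial factor $T^{2N}$ (the number of cells) and, once $T\geq T_0$, lets any constant below $\tfrac14$ absorb the Gaussian corrections and the gap between the variance at a cell's centre and over the cell. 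Recalling $a^\alpha=c/(1+\rho)^2$, this produces the stated constant $c/(8(1+\rho)^2)$ in \eqref{mathcal_H(m) bounds}.

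The adjacent case \eqref{mathcal_H(1) bounds} is the most delicate, and I expect it to be the main obstacle. Here $\mathbb{S}$ and $\mathbb{T}$ share a face, so $|t-s|$ is no longer bounded away from $0$ and the exponential mechanism above fails near the face. Instead I would localize: away from the shared face the $i_0$-separation grows and the far-type estimate applies, so the relevant contribution concentrates in an $O(1)$-thick slab along the face. The same Laplace bound, now integrated over this slab rather than the whole product, shows the boundary contribution is of strictly lower order than the bulk $T^N$; carrying this out with uniform-in-$T$ maximal inequalities (adapting Lemma~9 of \cite{Ladneva_Piterbarg_2000}) yields the claimed improvement $\mathcal{H}_{\alpha,c}(\mathbf{m})\leq C_1 T^{N-1/2}$, with room to spare. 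The difficulty is purely in making the face-localization and the attendant maximal inequalities rigorous and uniform in $T$; the precise power is not critical, since any $o(T^N)$ bound suffices for the application in $\Sigma_2$.

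Finally, \eqref{mathcal_H(m) summation bounds} follows by splitting the sum. The $3^N-1$ terms with $|m_{i_0}|=1$ each contribute at most $C_1 T^{N-1/2}$ by \eqref{mathcal_H(1) bounds}. Grouping the remaining terms by $k=|m_{i_0}|\geq 2$, of which there are $O(k^{N-1})$, \eqref{mathcal_H(m) bounds} gives a convergent lattice sum dominated by $C\,T^{2N}e^{-cT^\alpha/(8(1+\rho)^2)}$, which is $o(T^{N-1/2})$ for $T\geq T_0$ since the exponential outweighs any power of $T$. Adding the two parts yields $\sum_{\mathbf{m}\neq\mathbf{0}}\mathcal{H}_{\alpha,c}(\mathbf{m})\leq C_1 T^{N-1/2}$, as required.
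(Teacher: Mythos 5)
Your treatment of the cases $\mathbf{m}=\mathbf{0}$ and $|m_{i_0}|\geq 2$ is viable, though it runs along a genuinely different track from the paper. You work directly at the level of the limiting fractional-Brownian-motion functional: the Laplace-transform identity $\mathbb{E}\exp\bigl(\tfrac12\zeta(s,t)\bigr)=\exp\bigl(-\tfrac14|t-s|^\alpha\bigr)$ is correct and, combined with a cell decomposition and Borell--TIS, would essentially re-derive Piterbarg's double-sum estimate in the limit. The paper instead never touches the limiting field: it introduces an auxiliary pre-limit stationary process $X$ with $r(t)=1-|t|^\alpha+o(|t|^\alpha)$, expresses the joint excursion probability over the two cubes as $H_\alpha([0,S]^N,[\mathbf{m}S,(\mathbf{m}+1)S])\frac{1}{\sqrt{2\pi}u}e^{-u^2/2}(1+o(1))$ via Piterbarg's Lemma 6.1 together with the identity \eqref{relation of H(S,T) and H(S)}, bounds the same probability by Piterbarg's Lemma 6.3, and reads off the bound on $H_\alpha(\cdot,\cdot)$ by comparing the two asymptotics. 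The paper's route outsources all the chaining to quotable lemmas; yours would require you to carry out the maximal inequalities for the drifted field $\chi(s)-|s|^\alpha$ over a growing cube yourself, which is doable but is real work you have only gestured at.

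The genuine gap is in the adjacent case $|m_{i_0}|=1$, and it is exactly where you predicted trouble. Your proposed localization to an \emph{$O(1)$-thick} slab along the shared face cannot work: if the slab has width $w$ independent of $T$, the complementary piece of $\mathbb{T}$ is separated from $\mathbb{S}$ only by distance $w$, so the far-field estimate gives at best $C\,T^{2N}e^{-cw^\alpha}$ for that piece --- a constant times $T^{2N}$, which is larger than the bulk term $T^N$, let alone the target $T^{N-1/2}$. The separation must grow with $T$ fast enough that $e^{-cw^\alpha}$ beats $T^{2N}$, i.e.\ $w\gg(\log T)^{1/\alpha}$; the paper takes the slab of width $\sqrt{S}$ (with $S=c^{1/\alpha}T/(1+\rho)^{2/\alpha}$), so that the near piece contributes volume $S^{N-1}\cdot\sqrt{S}=S^{N-1/2}$ via Lemma 6.1 while the far piece contributes $C_0S^{2N}e^{-S^{\alpha/2}/8}=o(S^{N-1/2})$ via Lemma 6.3. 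With that one correction --- a slab whose width grows with $T$, anywhere between $(\log T)^{1/\alpha}$ and $\sqrt{T}$ --- your outline for this case, and hence your assembly of \eqref{mathcal_H(m) summation bounds}, goes through. (You are right, incidentally, that for the application to $\Sigma_2$ any $o(T^N)$ bound on the sum would suffice; but the lemma as stated claims $T^{N-1/2}$, and the $O(1)$ slab delivers neither.)
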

Applying Lemmas \ref{lem_Riemann Sum} and \ref{lem_mathcal_H(m)} to the RHS of \eqref{lower bound around maximum 2},
we obtain
\begin{align}
\Sigma_2&\leq \frac{C_0(1+\rho)^2(1+\gamma_3(u))}{4\pi \sqrt{1-\rho^2}\, u^2}T^{2N-\frac{1}{2}}
\sum_{(\mathbf{k},\mathbf{l})\in \mathcal{C}}\exp\left(-\frac{u^2}{1+r(|\tau_{\mathbf{k}\mathbf{l}}|)}\right)\nonumber\\
&\leq C_0 (2\pi)^{\frac{N}{2}}(-r''(0))^{-\frac{N}{2}}(1+\rho)^N mes_N(A_1\cap A_2)T^{-\frac{1}{2}} \nonumber\\
&\qquad \qquad \times  u^{N(\frac{2}{\alpha_1}+\frac{2}{\alpha_2}-1)}\Psi(u,\rho)(1+\gamma_4(u)),
\end{align}
where $\gamma_4(u)\to 0$, as $u\rightarrow \infty$. By letting $u\rightarrow \infty$
and $T\rightarrow\infty$ successively, we have
\begin{align}
\label{lower bound around maximum 22}
&\limsup_{u\rightarrow \infty}\frac{\Sigma_2}{u^{N(\frac{2}{\alpha_1}+\frac{2}{\alpha_2}-1)}\Psi(u,\rho)}=0.
\end{align}
By combining \eqref{lower bound around maximum}, \eqref{lower bound around maximum 1} and 
\eqref{lower bound around maximum 22}, we have
\begin{align}
\label{lower bound around maximum 3}
&\liminf_{u\rightarrow \infty}\frac{\mathbb{P}\left(\bigcup_{(s,t)\in \mathcal{D}} 
\{X_1(s)>u,X_2(t)>u)\}\right)}{u^{N(\frac{2}{\alpha_1}+\frac{2}{\alpha_2}-1)}\Psi(u,\rho)}\nonumber\\
&\geq \liminf_{u\rightarrow \infty}\frac{\Sigma_1}{u^{N(\frac{2}{\alpha_1}+\frac{2}{\alpha_2}-1)}\Psi(u,\rho)}-\limsup_{u\rightarrow \infty}\frac{\Sigma_2}{u^{N(\frac{2}{\alpha_1}+\frac{2}{\alpha_2}-1)}\Psi(u,\rho)}\\
&\geq (2\pi)^{\frac{N}{2}}(-r''(0))^{-\frac{N}{2}}c_1^{\frac{N}{\alpha_1}}c_2^{\frac{N}{\alpha_2}}(1+\rho)^{-N(\frac{2}{\alpha_1}+\frac{2}{\alpha_2}-1)}mes_N(A_1\cap A_2)H_{\alpha_1}H_{\alpha_2}. \nonumber
\end{align}
It is now clear that \eqref{around maximum} follows from  \eqref{upper bound around maximum 4} and \eqref{lower bound around maximum 3}.

Now we prove \eqref{off maximum}.  Define
\begin{equation}
\label{Y(s,t)}
Y(s,t):=X_1(s)+X_2(t),\ \text{for}\ (s,t)\in \Pi\setminus \mathcal{D}.
\end{equation}
For $x=(s_1,t_1), y=(s_2,t_2) \in \Pi\setminus \mathcal{D} $, let
$|x-y|=\sqrt{|s_1-s_2|^2+|t_1-t_2|^2}$. Then we can verify that
\begin{equation}
\label{global holder}
\mathbb{E}|Y(x)-Y(y)|^2\leq C_0 |x-y|^{\min(\alpha_1,\alpha_2)}, \ \forall x,y\in \Pi\setminus \mathcal{D}.
\end{equation}
By applying Theorem $8.1$ in \cite{Piterbarg_1996}, we obtain that the numerator of \eqref{off maximum} is bounded above by
\begin{eqnarray}
\label{off maximum 1}
&&\mathbb{P}\bigg(\bigcup_{(s,t)\in\Pi\setminus \mathcal{D}} \{X_1(s)>u,X_2(t)>u)\}\bigg)\leq \mathbb{P}\left(\max_{(s,t)\in\Pi\setminus \mathcal{D}}Y(s,t)>2u\right)\nonumber\\
&&\leq C_0u^{-1+\frac{2N}{\min(\alpha_1,\alpha_2)}}\exp\left(-\frac{u^2}{1+\max_{(s,t)\in\Pi\setminus \mathcal{D}}r(|t-s|)}\right).
\end{eqnarray}
Since $r(|t-s|)=\rho+\frac{1}{2}r''(0)|t-s|^2(1+o(1))$ and $r(\cdot)$ attains maximum only at zero, we have
\begin{eqnarray}
\label{off maximum 2}
\max_{(s,t)\in\Pi\setminus \mathcal{D}}r(|t-s|) \leq \rho-\frac{1}{3}(-r''(0))\delta^2(u)
\end{eqnarray}
for $ u$  large enough. So \eqref{off maximum 1} is at most
\begin{equation}
\label{off maximum 3}
\begin{split}
&C_0u^{-1+\frac{2N}{\min(\alpha_1,\alpha_2)}}\exp\left(-\frac{u^2}{1+\rho-\frac{1}{3}(-r''(0))\delta^2(u)}\right) \\
&\leq  C_0u^{-1+\frac{2N}{\min(\alpha_1,\alpha_2)}}\exp\left(-\frac{u^2}{1+\rho}\right)\exp\left(-\frac{\frac{1}{3}(-r''(0))\delta^2(u)u^2}{(1+\rho)^2}\right) \\
&=\frac{2\pi\sqrt{1-\rho^2}C_0}{(1+\rho)^2}u^{1+\frac{2N}{\min(\alpha_1,\alpha_2)}-\frac{-r''(0)}{3(1+\rho)^2}C^2}\Psi(u,\rho),
\end{split}
\end{equation}
where the inequality holds since $\frac{1}{x-y}\geq \frac{1}{x}+\frac{y}{x^2}, \forall x>y$.
Compare \eqref{off maximum 3} with \eqref{around maximum}, it is easy to see \eqref{off maximum} holds if and only if
\begin{equation}
1+\frac{2N}{\min(\alpha_1,\alpha_2)}-\frac{-r''(0)}{3(1+\rho)^2}C^2<N\Big(\frac{2}{\alpha_1}+\frac{2}{\alpha_2}-1\Big)
\end{equation}
Hence, by choosing the constant $C$ satisfying
\begin{equation}
C>\left[\frac{3(1+\rho)^2}{-r''(0)}\left(N\Big(\frac{2}{\min(\alpha_1,\alpha_2)}+1-\frac{2}{\alpha_1}-\frac{2}{\alpha_2}\Big)+1\right)_{+}\right]^\frac{1}{2},
\end{equation}
we  conclude \eqref{off maximum}.
\end{proof}

\begin{proof}[Proof of Theorem \ref{theorem: Jordan measurable sets with mes zero}]
From the proof of Theorem \ref{theorem_Jordan measurable sets}, we see that
the exponential decaying rate of the excursion probability is only determined
by the region where the maximum cross correlation is attained. In the case
of $mes_N(A_1\cap A_2)=0$ but $
A_1\cap A_2 \neq \emptyset$, the exponential part, $e^{-\frac{u^2}{1+\rho}}$,
remains the same.  Yet, the dimension reduction of $A_1\cap A_2$ does affect
the polynomial power of the excursion probability, which is determined by the quantity $$h(u)=\sum_{(\mathbf{k},\mathbf{l})\in \mathcal{C}} \exp\left\{-u^2\left(\frac{1}{1+r(|\tau_{\mathbf{k}\mathbf{l}}|)}-\frac{1}{1+\rho}\right)\right\}$$
in Lemma \ref{lem_Riemann Sum}. Under the assumptions of Theorem
\ref{theorem: Jordan measurable sets with mes zero}, the
set $\mathcal{C}$ and the behavior of $h(u)$ change. We will make use of
the following lemma which plays the role of Lemma \ref{lem_Riemann Sum}.

\begin{lemma}
\label{lem_Riemann Sum-2}
Under the assumptions of Theorem \ref{theorem: Jordan measurable sets with mes zero}, we have
\begin{align}
\label{h(u)_mes zero}
h(u)&=(2\pi)^{M/2}(-r''(0))^{M/2-N}(1+\rho)^{2N-M}T^{-2N}mes_M(A_{1,M}\cap A_{2,M})\nonumber\\
&\times u^{M+N\big(\frac{2}{\alpha_1}+\frac{2}{\alpha_2}-2\big)}(1+o(1)),\ \text{as}\ u\rightarrow \infty.
\end{align}
Moreover, if we replace $\mathcal{C}$ with $\mathcal{C}^\circ$ defined in \eqref{mathcal C}, then
the above statement still holds.
\end{lemma}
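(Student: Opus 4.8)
The plan is to reduce the sum $h(u)$ to a product of a ``bulk'' contribution from the first $M$ coordinates and a ``boundary'' contribution from the last $N-M$ coordinates, exploiting the product structure of $A_1,A_2$ in \eqref{Eq:intM}. First I would record the local expansion of the exponent. By Assumption iv) and Taylor's theorem, $r(s)=\rho+\tfrac12 r''(0)s^2+o(s^2)$ as $s\to 0^+$, with $r''$ continuous near $0$, so that
\begin{equation}
\frac{1}{1+r(|\tau|)}-\frac{1}{1+\rho}=\frac{-r''(0)}{2(1+\rho)^2}\,|\tau|^2\,(1+o(1)),
\end{equation}
uniformly over $|\tau|\le 2\delta(u)$. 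Writing $\beta:=\frac{-r''(0)}{2(1+\rho)^2}$, each summand is therefore $\exp\{-\beta u^2|\tau_{\mathbf{k}\mathbf{l}}|^2(1+o(1))\}$. Splitting $\mathbf{k}=(\mathbf{k}',\mathbf{k}'')$ and $\mathbf{l}=(\mathbf{l}',\mathbf{l}'')$ into the first $M$ and last $N-M$ coordinates and using $|\tau_{\mathbf{k}\mathbf{l}}|^2=|\tau'_{\mathbf{k}'\mathbf{l}'}|^2+\sum_{j>M}(\tau_{\mathbf{k}\mathbf{l}})_j^2$, the summand factorizes across the two coordinate blocks.

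For the bulk block, the constraints on $(\mathbf{k}',\mathbf{l}')$ coming from $A_{1,M},A_{2,M}$ are exactly those of the positive-measure situation, so by the same Riemann-sum argument underlying Lemma \ref{lem_Riemann Sum}, now carried out in dimension $M$, I would obtain the factor
\begin{equation}
(2\pi)^{M/2}(-r''(0))^{-M/2}(1+\rho)^M T^{-2M}\,mes_M(A_{1,M}\cap A_{2,M})\,u^{M(\frac{2}{\alpha_1}+\frac{2}{\alpha_2}-1)}(1+o(1)).
\end{equation}
For each boundary coordinate $j>M$, the cube from $A_1$ must meet $[S_j,T_j]$ and the cube from $A_2$ must meet $[T_j,R_j]$, so the lower corners $x=k_jd_1(u)$ and $y=l_jd_2(u)$ lie (up to one lattice step) on opposite sides of the shared endpoint $T_j$. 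I would approximate the one-dimensional double lattice sum by a Riemann sum over the quadrant $\{x\le T_j,\ y\ge T_j\}$,
\begin{equation}
\sum_{k_j,l_j}e^{-\beta u^2(y-x)^2}\sim\frac{1}{d_1(u)d_2(u)}\int_{x\le T_j}\!\!\int_{y\ge T_j}e^{-\beta u^2(y-x)^2}\,dy\,dx=\frac{1}{d_1(u)d_2(u)}\cdot\frac{1}{2\beta u^2},
\end{equation}
the integral being evaluated by the substitution $z=y-x\ge 0$, $\int_0^\infty z\,e^{-\beta u^2z^2}\,dz=\frac{1}{2\beta u^2}$. Since $d_1(u)d_2(u)=T^2u^{-2/\alpha_1-2/\alpha_2}$ and $\frac{1}{2\beta}=\frac{(1+\rho)^2}{-r''(0)}$, each boundary coordinate contributes $\frac{(1+\rho)^2}{-r''(0)}T^{-2}u^{2/\alpha_1+2/\alpha_2-2}(1+o(1))$. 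Multiplying the bulk factor by the $N-M$ boundary factors reproduces exactly the constants $(2\pi)^{M/2}$, $(-r''(0))^{M/2-N}$, $(1+\rho)^{2N-M}$, $T^{-2N}$ and the power $u^{M+N(2/\alpha_1+2/\alpha_2-2)}$ in \eqref{h(u)_mes zero}; the degenerate case $M=0$ is just the pure product of boundary factors with $mes_0\equiv1$.

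Two points require care, and the second is the main obstacle. First, the error $(1+o(1))$ in the expansion of the exponent must be made uniform: as in Lemma \ref{lem_Riemann Sum}, I would isolate the terms with $|\tau_{\mathbf{k}\mathbf{l}}|\le A/u$, where the correction is genuinely $o(1)$, and control the remaining range $A/u\le|\tau|\le 2\delta(u)$ by a Gaussian tail bound, letting $A\to\infty$ after $u\to\infty$. Second, and more delicate, the index set $\mathcal{C}$ couples all coordinates through the single inequality $|t-s|\le\delta(u)$, whereas the factorization above treats the two coordinate blocks independently. The key observation is that the Gaussian concentrates $\tau_{\mathbf{k}\mathbf{l}}$ on the scale $1/u\ll\delta(u)=C\sqrt{\log u}/u$, so $\mathcal{C}$ can be sandwiched between an inner product region (each coordinate within $\delta(u)/\sqrt{N}$, hence lying in $\mathcal{C}$) and an outer product region (the natural per-coordinate ranges, containing $\mathcal{C}$), both yielding the same leading asymptotics; terms that satisfy $|t-s|\le\delta(u)$ but have some coordinate $\tau_j$ not of order $1/u$ are exponentially suppressed by the corresponding Gaussian factor and contribute negligibly.

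Finally, the same sandwiching shows that replacing $\mathcal{C}$ by $\mathcal{C}^\circ$, which differ only in the clipped cubes along the boundaries of $A_1,A_2$, does not affect the limit. The boundary clipping by the endpoints $S_j,T_j,R_j$ also perturbs the integration limits in the quadrant integral only by $O(d_i(u))=o(1/u)$, which is negligible against the Gaussian width $1/u$ and therefore does not alter the constant $\frac{1}{2\beta u^2}$. Assembling the bulk and boundary estimates under these uniform controls then gives \eqref{h(u)_mes zero}.
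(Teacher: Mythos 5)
Your proposal is correct and follows essentially the same route as the paper: after the Taylor expansion of $\frac{1}{1+r(|\tau|)}-\frac{1}{1+\rho}$, both arguments reduce $h(u)$ to a Gaussian Riemann sum whose limiting integral factorizes into an $M$-dimensional bulk contributing $(\pi/a)^{M/2}\,mes_M(A_{1,M}\cap A_{2,M})$ and $N-M$ half-line integrals $\int_0^\infty z\,e^{-au^2z^2}\,dz=\frac{1}{2au^2}$ (the factor $\prod_{j=M+1}^N x_j 1_{\{x_j>0\}}$ in the paper's \eqref{intuition_2} is exactly the Jacobian of your quadrant integral). The only organizational difference is that the paper never factorizes the lattice sum itself: it sandwiches the full $2N$-dimensional sum between the min/max Riemann sums $p(u),q(u)$ of a single integral and controls $q(u)-p(u)$ via the refined cardinality bound $mes_{2N}(\mathcal{\tilde D})\leq K\,\delta(u)^{2N-M}$ from \eqref{upper bound of mes(tilde D)_2}, so the inner/outer product-region sandwich you introduce to decouple the coordinates of $\mathcal{C}$ is not needed there.
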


The rest of the proof of Theorem \ref{theorem: Jordan measurable sets with mes zero} is the same as
 that of Theorem \ref{theorem_Jordan measurable sets} and it is omitted here.
\end{proof}

\section{Proof of Lemmas}
\label{sec_proof of lemmas}

For proving Lemma \ref{main lemma}, we will make use of the following
\begin{lemma}
\label{uniformly conditional convergence}
Let $s_u$ and $t_u$ be two $\mathbb{R}^N$-valued functions of $u$ and
let $\tau_u:=t_u-s_u$.
For any  compact rectangles  $\mathbb{S}$ and
$\mathbb{T}$ in $\mathbb{R}^N$, define
\begin{align}
\label{tangent field}
\xi_u(s)&:=u(X_1(s_u+u^{-2/\alpha_1}s)-u)+x,\quad \forall \, s\in \mathbb{S},\nonumber\\
\eta_u(t)&:=u(X_2(t_u+u^{-2/\alpha_2}t)-u)+y,\quad  \forall \, t\in \mathbb{T}
\end{align}
and for any $t \in \mathbb{R}^N,$ let
\begin{equation}
\label{bivariate FBM}
\xi(t):=\sqrt{c_1}\chi_1(t)-\frac{c_1|t|^{\alpha_1}}{1+\rho},\ \ \
 \eta(t):=\sqrt{c_2}\chi_2(t)-\frac{c_2|t|^{\alpha_2}}{1+\rho},
\end{equation}
where  $\chi_1(t),\chi_2(t)$ are two independent
fractional Brownian motions with indices $\alpha_1/2$ and $\alpha_2/2$, respectively.
Then, the finite dimensional distributions (abbr. f.d.d.) of $(\xi_u(\cdot),\eta_u(\cdot))$,
given $X_1(s_u)=u-\frac{x}{u},\, X_2(t_u)=u-\frac{y}{u}$, converge
uniformly to the f.d.d. of $(\xi(\cdot),\eta(\cdot))$ for all $s_u$ and $t_u$ that satisfy
$|\tau_u|\leq C_0\sqrt{\log u}/u$.  Furthermore, as $u \rightarrow \infty$,
\begin{align}
\label{cond maximum unif convergence}
&\mathbb{P}\bigg(\max_{s\in \mathbb{S}}\xi_u(s)>x,\max_{t\in \mathbb{T}}\eta_u(t)>y\ \Big |\ X_1(s_u)
=u-\frac{x}{u},X_2(t_u)=u-\frac{y}{u}\bigg)\nonumber\\
&\rightarrow \mathbb{P}\bigg(\max_{s\in \mathbb{S}}\xi(s)>x,\max_{t\in \mathbb{T}}\eta(t)>y\bigg),
\end{align}
where the convergence is uniform for all $s_u$ and $t_u$ that satisfy $|\tau_u|\leq C_0\sqrt{\log u}/u$.
\end{lemma}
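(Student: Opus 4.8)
The plan is to exploit that, conditionally on the Gaussian vector $V=(X_1(s_u),X_2(t_u))$, the pair $(\xi_u,\eta_u)$ of \eqref{tangent field} is still a Gaussian field, so that proving uniform convergence of its finite-dimensional distributions reduces to proving the uniform convergence of the conditional means and conditional covariances. Fix points $s^{(1)},\dots,s^{(p)}\in\mathbb{S}$ and $t^{(1)},\dots,t^{(q)}\in\mathbb{T}$, set $a_i=s_u+u^{-2/\alpha_1}s^{(i)}$, $b_j=t_u+u^{-2/\alpha_2}t^{(j)}$, and note that the conditioning covariance is $\Sigma_{VV}$ with diagonal $1$ and off-diagonal $\rho_u:=r(|\tau_u|)$. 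Since $|\tau_u|\le C_0\sqrt{\log u}/u\to 0$, Assumption (iv) gives $\rho_u\to\rho$ and hence $\Sigma_{VV}^{-1}\to (1-\rho^2)^{-1}\bigl(\begin{smallmatrix}1&-\rho\\-\rho&1\end{smallmatrix}\bigr)$, both uniformly in $\tau_u$. All subsequent expansions use the two uniform asymptotics: $r_{ii}(a,s_u)=1-c_i u^{-2}|\cdot|^{\alpha_i}+o(u^{-2})$ from Assumption (i) (uniform local stationarity), and $r(|\tau_u+v|)=\rho_u+\tfrac12 r''(0)\bigl(2\langle\tau_u,v\rangle+|v|^2\bigr)+o(u^{-2})$ for $|v|=O(u^{-2/\min(\alpha_1,\alpha_2)})$, the latter being uniform because $r''$ is continuous near $0$ by Assumption (iv).

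For the conditional means I would plug the Gaussian regression formula $\mathbb{E}[X_1(a_i)\mid V=v]=\mathrm{Cov}(X_1(a_i),V)\,\Sigma_{VV}^{-1}v$ into $\xi_u$. Collecting the $O(u)$ contributions produces a factor $\tfrac{u}{1+\rho_u}\bigl(r_{11}(a_i,s_u)+r_{12}(a_i,t_u)\bigr)$; since $r_{11}(a_i,s_u)+r_{12}(a_i,t_u)=1+\rho_u-c_1u^{-2}|s^{(i)}|^{\alpha_1}+o(u^{-2})$, the leading $u$ cancels against the recentering by $u$ in \eqref{tangent field}, and the $-x/u$, $-y/u$ entries of $v$ contribute exactly $+x$. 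A direct computation then yields $\mathbb{E}[\xi_u(s^{(i)})\mid V]\to -c_1|s^{(i)}|^{\alpha_1}/(1+\rho)=\mathbb{E}[\xi(s^{(i)})]$ uniformly, and likewise for $\eta_u$; the division by $1+\rho$ is precisely the residue of this cancellation.

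For the covariances I would multiply by $u^2$. The within-coordinate terms are routine: the regression collapses to $\mathrm{Cov}(X_1(a_i),X_1(a_k)\mid V)=c_1u^{-2}\bigl(|s^{(i)}|^{\alpha_1}+|s^{(k)}|^{\alpha_1}-|s^{(i)}-s^{(k)}|^{\alpha_1}\bigr)+o(u^{-2})$, so that $u^2\,\mathrm{Cov}(\xi_u(s^{(i)}),\xi_u(s^{(k)})\mid V)$ converges to the rescaled fBm covariance $\mathrm{Cov}(\xi(s^{(i)}),\xi(s^{(k)}))$, and symmetrically for $\eta_u$ with $c_2,\alpha_2$. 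The essential point is the cross term, which encodes the asymptotic independence of $\xi$ and $\eta$. Writing $G(v):=r(|\tau_u+v|)$, $p:=-u^{-2/\alpha_1}s^{(i)}$, $q:=u^{-2/\alpha_2}t^{(j)}$, the regression formula reduces the conditional cross-covariance, up to $o(u^{-2})$, to the mixed second difference
\[
\mathrm{Cov}\big(X_1(a_i),X_2(b_j)\mid V\big)=G(p+q)-G(p)-G(q)+G(0)+o(u^{-2}),
\]
and since $r\in C^2$ near $0$ this second difference is bounded by $|p|\,|q|\,\sup\|\mathrm{Hess}\,G\|=O\bigl(u^{-2/\alpha_1-2/\alpha_2}\bigr)=o(u^{-2})$, uniformly. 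Hence $u^2\,\mathrm{Cov}(\xi_u,\eta_u\mid V)\to 0$ uniformly, giving the independence of the limiting fields in \eqref{bivariate FBM}.

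Finally, to upgrade the uniform f.d.d. convergence to the statement \eqref{cond maximum unif convergence}, I would combine it with tightness: the same conditional computations give $\mathbb{E}\bigl[(\xi_u(s)-\xi_u(s'))^2\mid V\bigr]\le C_0|s-s'|^{\alpha_1}$ and the analogue for $\eta_u$, uniformly in $\tau_u$, so the conditional laws are uniformly tight in $C(\mathbb{S})\times C(\mathbb{T})$. Because $(\xi,\eta)$ is a nondegenerate Gaussian field, $\max_{\mathbb{S}}\xi$ and $\max_{\mathbb{T}}\eta$ have continuous distributions, so $\{\max_{\mathbb{S}}\phi>x,\ \max_{\mathbb{T}}\psi>y\}$ is a continuity set of the limit law and weak convergence transfers to the probabilities; uniformity in $\tau_u$ follows by a subsequence argument (or by discretizing each maximum on a fine grid, controlling the discretization error uniformly via the Hölder bound above, and then invoking the uniform f.d.d. convergence on the grid). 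I expect the main obstacle to be the cross-covariance control: after the $u^{2}$ scaling both the $\rho$-level and the $\tfrac12 r''(0)|\tau_u|^2$-level contributions must cancel, and it is only the clean second-difference structure together with the $C^2$-regularity of $r$ that yields the required $o(u^{-2})$ \emph{uniformly} over the moderate-deviation range $|\tau_u|\le C_0\sqrt{\log u}/u$.
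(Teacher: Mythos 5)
Your proposal is correct and follows essentially the same route as the paper: reduce the conditional law to a Gaussian family, establish uniform convergence of the conditional means and covariances via the regression formula (the paper isolates the ``uniform convergence of Gaussian parameters implies uniform convergence of d.f.'s'' step as a separate lemma, and bounds the cross terms by applying the mean value theorem to each first difference rather than by your single mixed-second-difference/Hessian bound, which is an equivalent and slightly cleaner way to get the same $o(u^{-2})$), and then pass from f.d.d.\ convergence to the supremum probabilities by discretizing on a grid and controlling the oscillation uniformly through the H\"older bound on the conditional canonical metric (via Dudley's entropy integral). Your alternative ``tightness plus continuity-set plus subsequence'' route for the last step would also work but is the less explicit of your two options; the discretization you describe is exactly what the paper does.
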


\begin{proof}
First, we prove the uniform convergence of finite dimensional distributions.
Given $X_1(s_u)=u-\frac{x}{u},\, X_2(t_u)=u-\frac{y}{u}$,
the distribution of the bivariate random field $(\xi_u(\cdot),\eta_u(\cdot))$
is still Gaussian. Thanks to the following
lemma (whose proof will be given at the end of this section), it suffices to
prove the uniform convergence of conditional mean and conditional variance.

\begin{lemma}
\label{lem_uniform convergence of f.d.d.}
Let $X(u,\tau_u)=(X_1(u,\tau_u),\ldots,X_n(u,\tau_u))^T$ be a Gaussian random
vector with mean $\mu(u,\tau_u)=(\mu_1(u,\tau_u),...,\mu_n(u,\tau_u)^T$ and
covariance matrix $\Sigma(u,\tau_u)$ with entries $\sigma_{ij}(u,\tau_u)
={\rm Cov}(X_i(u,$ $\tau_u),X_j(u,\tau_u)),\ i,j=1,2,\dots,n$. Similarly, let
$X=(X_1,\ldots,X_n)^T$ be a Gaussian random vector with mean $\mu=(\mu_1,...,\mu_n)$
and covariance matrix $\Sigma=(\sigma_{ij})_{i,j=1,...,n}$. Assume that $\Sigma$ is non-singular. Let $F_u(\cdot)$ and
$F(\cdot)$ be the distribution functions of $X(u,\tau_u)$ and $X$ respectively.
If
\begin{align}
\label{mean_var_unif_conver}
&\lim_{u\rightarrow \infty} \max_{\tau_u}|\mu_j(u,\tau_u)-\mu_j|=0,\nonumber\\
&\lim_{u\rightarrow \infty} \max_{\tau_u}|\sigma_{ij}(u,\tau_u)-\sigma_{ij}|=0,\ \  
i,j=1,2,\dots, n,
\end{align}
then for any $x\in \mathbb{R}^N$,
\begin{align}
\lim_{u\rightarrow \infty} \max_{\tau_u}|F_u(x)-F(x)|=0.
\end{align}
\end{lemma}

We continue with the proof of Lemma \ref{uniformly conditional convergence} and postpone the
proof of Lemma \ref{lem_uniform convergence of f.d.d.} to the end of this section.  
Recall that, for two random vectors $X,Y\in \mathbb{R}^m$, their covariance is defined as
${\rm Cov}(X,Y):=\mathbb{E}[(X-\mathbb{E}X)(Y-\mathbb{E}Y)^T]$ and the variance matrix
of $X$ is defined as ${\rm Var}(X):={\rm Cov}(X,X)$. The conditional mean of $(\xi_u(t),\eta_u(t))^T$
given $X_1(s_u)=u-\frac{x}{u},\, X_2(t_u)=u-\frac{y}{u}$, is
\begin{align}
\label{conditional mean}
&\mathbb{E}\left(
\begin{array}{l}
\xi_u(t)\\
\eta_u(t)
\end{array}
\left |
\begin{array}{l}
X_1(s_u)=u-\frac{x}{u}\\
X_2(t_u)=u-\frac{y}{u}
\end{array}
\right.
\right)  = \mathbb{E}\bigg(
\begin{array}{l}
\xi_u(t)\\
\eta_u(t)
\end{array}
\bigg) \nonumber\\
& \qquad +{\rm Cov}\left(\left(
\begin{array}{l}
\xi_u(t)\\
\eta_u(t)
\end{array}
\right),
\left(
\begin{array}{ll}
X_1(s_u)\\
X_2(t_u)
\end{array}
\right)
\right) \left({\rm Var}\left(
\begin{array}{ll}
X_1(s_u)\\
X_2(t_u)
\end{array}
\right)
\right)^{-1}\left(
\begin{array}{l}
u-\frac{x}{u}\\
u-\frac{y}{u}
\end{array}
\right)\nonumber\\
&=\left(
\begin{array}{l}
-u^2+x\\
-u^2+y
\end{array}
\right)+\frac{u}{1-r^2(|\tau_u|)}\left(
\begin{array}{ll}
r_{11}(s_u+u^{-2/\alpha_1}t,s_u)&r(|\tau_u-u^{-2/\alpha_1}t|)\\
r(|\tau_u+u^{-2/\alpha_2}t|)& r_{22}(t_u+u^{-2/\alpha_2}t,t_u)
\end{array}
\right)\nonumber\\
&\qquad \times
\left(
\begin{array}{cc}
1&-r(|\tau_u|)\\
-r(|\tau_u|)&1
\end{array}
\right)\left(
\begin{array}{l}
u-\frac{x}{u}\\
u-\frac{y}{u}
\end{array}
\right)\nonumber\\
& \triangleq \left(
\begin{array}{l}
a_1(u)\\
a_2(u)
\end{array}
\right),
\end{align}
where
\begin{align}
a_1(u)=&
-\frac{u^2(1-r_{11}(s_u+u^{-2/\alpha_1}t,s_u))-u^2(r(|\tau_u-u^{-2/\alpha_1}t|)-r(|\tau_u|))}{1+r(|\tau_u|)}\nonumber\\
&+\frac{(x-yr(|\tau_u|))(1-r_{11}(s_u+u^{-2/\alpha_1}t,s_u))}{1-r^2(|\tau_u|)}\nonumber\\
&+\frac{(y-xr(|\tau_u|))(r(|\tau_u|)-r(|\tau_u-u^{-2/\alpha_1}t|))}{1-r^2(|\tau_u|)}
\end{align}
and
\begin{align}
a_2(u)=&-\frac{u^2(1-r_{22}(t_u+u^{-2/\alpha_2}t,t_u))-u^2(r(|\tau_u+u^{-2/\alpha_2}t|)-r(|\tau_u|))}{1+r(|\tau_u|)}\nonumber\\
&+\frac{(y-xr(|\tau_u|))(1-r_{22}(t_u+u^{-2/\alpha_1}t,t_u))}{1-r^2(|\tau_u|)}\nonumber\\
&+\frac{(x-yr(|\tau_u|))(r(|\tau_u|)-r(|\tau_u+u^{-2/\alpha_2}t|))}{1-r^2(|\tau_u|)}.
\end{align}
Applying the mean value theorem twice, we see that for $u$ large enough,
\begin{eqnarray}
\label{cross corr diff}
&&|r(|\tau_u+u^{-2/\alpha}t|)-r(|\tau_u|)|\leq |u^{-2/\alpha}t|\cdot \max_{\substack{s\  \text{is between}\\
|\tau_u|\ \text{and}\ |\tau_u+u^{-2/\alpha}t|}}|r'(s)|\nonumber\\
&&\leq |u^{-2/\alpha}t|\cdot \max_{|s|\leq 2C_0\sqrt{\log u}/u}|r'(s)| \nonumber\\
&&\leq|u^{-2/\alpha}t|\cdot \max_{|s|\leq 2C_0\sqrt{\log u}/u}\left(|s|\cdot\max_{ |t|\leq |s|}|r''(t)|\right)\nonumber\\
&&\leq2 C_0|t|\sqrt{\log u}\cdot u^{-1-2/\alpha}\cdot \max_{|t| \leq 2 C_0\sqrt{\log u}/u}|r''(t)|\nonumber\\
&&\leq 4 C_0|r''(0)||t|\sqrt{\log u}\cdot u^{-1-2/\alpha},
\end{eqnarray}
where the second inequality holds because of $u^{-2/\alpha}=o(\sqrt{\log u}/u)$, as $u \rightarrow \infty$
and the last inequality holds since $r''(\cdot)$ is continuous in a neighborhood of zero. Thus
\eqref{cross corr diff} implies that, as $u \rightarrow \infty$,
\begin{equation}
\label{cross corr diff 2}
u^2|r(|\tau_u+u^{-2/\alpha}t|)-r(|\tau_u|)|\leq 4 C_0|r''(0)||t|\sqrt{\log u}\cdot u^{1-2/\alpha}\rightarrow 0,
\end{equation}
where the convergence is uniform for all $s_u$ and $t_u$ that satisfy $|\tau_u|\leq C_0\sqrt{\log u}/u$.
We also notice that for $ i=1,2$ and all $s\in \mathbb{R}^N$,
\begin{equation}
\label{auto corr diff}
1-r_{ii}(s+u^{-2/\alpha}t,s)=c_iu^{-2}|t|^{\alpha_i}+o(u^{-2}), \, \text{as}\ u\rightarrow \infty.
\end{equation}
By \eqref{conditional mean}, \eqref{cross corr diff 2}, and \eqref{auto corr diff}, we conclude that, as
$u \rightarrow \infty,$
\begin{align}
\label{conditional mean 2}
&\mathbb{E}\left(
\begin{array}{l}
\xi_u(t)\\
\eta_u(t)
\end{array}
\left |
\begin{array}{l}
X_1(s_u)=u-\frac{x}{u}\\
X_2(t_u)=u-\frac{y}{u}
\end{array}
\right.
\right)\rightarrow\left(
\begin{array}{l}
-\frac{c_1|t|^{\alpha_1}}{1+\rho}\\
-\frac{c_2|t|^{\alpha_2}}{1+\rho}
\end{array}
\right),
\end{align}
where the convergence is uniform w.r.t. $s_u$ and $t_u$  satisfying $|\tau_u|\leq C_0\sqrt{\log u}/u$.

Next, we consider the conditional covariance matrix of $(\xi_u(t)-\xi_u(s),\eta_u(t)-\eta_u(s))^T$.
\begin{align}
\label{conditional variance}
&{\rm Var}\left(\left(
\begin{array}{l}
\xi_u(t)-\xi_u(s)\\
\eta_u(t)-\eta_u(s)
\end{array}
\right)\left |
\begin{array}{l}
X_1(s_u)\\
X_2(t_u)
\end{array}
 \right.
\right)\nonumber\\
&={\rm Var}\left(
\begin{array}{l}
\xi_u(t)-\xi_u(s)\\
\eta_u(t)-\eta_u(s)
\end{array}
\right)
- {\rm Cov}\left(\left(
\begin{array}{l}
\xi_u(t)-\xi_u(s)\\
\eta_u(t)-\eta_u(s)
\end{array}
\right),
\left(
\begin{array}{ll}
X_1(s_u)\\
X_2(t_u)
\end{array}
\right)
\right)\nonumber\\
&\qquad \times{\rm Var}\left(
\begin{array}{l}
X_1(s_u)\\
X_2(t_u)
\end{array}
\right)^{-1} {\rm Cov}\left(\left(
\begin{array}{l}
\xi_u(t)-\xi_u(s)\\
\eta_u(t)-\eta_u(s)
\end{array}
\right),
\left(
\begin{array}{ll}
X_1(s_u)\\
X_2(t_u)
\end{array}
\right)
\right)^T.
\end{align}
Let $h_u(t,s):=r(|\tau_u+u^{-2/\alpha_2}t-u^{-2/\alpha_1}s|)$. Applying \eqref{cross corr diff 2} and
\eqref{auto corr diff}, we obtain
\begin{align}
\label{conditional variance 1}
&{\rm Var}\left(
\begin{array}{l}
\xi_u(t)-\xi_u(s)\\
\eta_u(t)-\eta_u(s)
\end{array}
\right)\nonumber\\
&=\left(
\begin{array}{ll}
2u^2(1-r_{11}(s_u+  &u^2(h_u(t,t)-h_u(s,t)\\
\quad u^{-2/\alpha_1}s,\, s_u+u^{-2/\alpha_1}t))&\quad -h_u(t,s)+h_u(s,s))\\
\vspace{-2mm}\\
u^2(h_u(t,t)-h_u(s,t)&2u^2(1-r_{22}(t_u+\\
\quad-h_u(t,s)+h_u(s,s))&\quad u^{-2/\alpha_2}s,\, t_u+u^{-2/\alpha_2}t))
\end{array}
\right)\nonumber\\
&=\left(
\begin{array}{cc}
2c_1|t-s|^{\alpha_1}(1+o(1))&o(1)\\
o(1)&2c_2|t-s|^{\alpha_2}(1+o(1))
\end{array}
\right),
\end{align}
where $o(1)$ converges to zero uniformly w.r.t. $\tau_u$ satisfying 
$|\tau_u|\leq C_0\sqrt{\log u}/u$, as $u \rightarrow \infty$.
Also, we have
\begin{align}
\label{conditional variance 2}
&{\rm Cov}\left[\left(
\begin{array}{l}
\xi_u(t)-\xi_u(s)\\
\eta_u(t)-\eta_u(s)
\end{array}\right),
\left(
\begin{array}{l}
X_1(s_u)\nonumber\\
X_2(t_u)
\end{array}
\right)\right]\nonumber\\
&=\left(
\begin{array}{ll}
u(r_{11}(s_u+u^{-2/\alpha_1}t,s_u)&u(r(|\tau_u-u^{-2/\alpha_1}t|)\\
\quad -r_{11}(s_u+u^{-2/\alpha_1}s,s_u))&\quad -r(|\tau_u-u^{-2/\alpha_1}s|))\\
\vspace{-2mm}\\
u(r(|\tau_u+u^{-2/\alpha_2}t|)&u(r_{22}(t_u+u^{-2/\alpha_2}t,t_u)\\
\quad-r(|\tau_u+u^{-2/\alpha_2}s|))&\quad -r_{22}(t_u+u^{-2/\alpha_2}s,t_u))
\end{array}
\right)\nonumber\\
&=\left(
\begin{array}{ll}
o(1)&o(1)\\
o(1)&o(1)
\end{array}
\right),
\end{align}
$\text{as}\ u\rightarrow \infty,$ and
\begin{align}
\label{conditional variance 3}
&{\rm Var}\left(
\begin{array}{l}
X_1(s_u)\\
X_2(t_u)
\end{array}
\right)^{-1}=\frac{1}{1-r^2(|\tau_u|)}
\left(
\begin{array}{ll}
1&-r(|\tau_u|)\\
-r(|\tau_u|)&1
\end{array}
\right).
\end{align}
By \eqref{conditional variance} -- \eqref{conditional variance 3}, we conclude that as $u\rightarrow \infty$,
\begin{align}
\label{conditional variance 4}
{\rm Var}\left(\left(
\begin{array}{l}
\xi_u(t)-\xi_u(s)\\
\eta_u(t)-\eta_u(s)
\end{array}
\right)\bigg |
\begin{array}{l}
X_1(s_u)\\
X_2(t_u)
\end{array}
\right)\rightarrow\left(
\begin{array}{cc}
2c_1|t-s|^{\alpha_1}&0\\
0&2c_2|t-s|^{\alpha_2}
\end{array}
\right),
\end{align}
where the convergence is uniform w.r.t.  $\tau_u$ satisfying $|\tau_u|\leq C_0\sqrt{\log u}/u$. Hence,
the uniform convergence of f.d.d. in Lemma \ref{uniformly conditional convergence} follows from
 \eqref{conditional mean 2}, \eqref{conditional variance 4} and Lemma \ref{lem_uniform convergence of f.d.d.}.

Now we prove the second part of Lemma  \ref{uniformly conditional convergence}. The continuous
mapping theorem (see, e.g., \cite{Billingsley_1968}, p. $30$)
can be used to prove \eqref{cond maximum unif convergence} holds when $s_u$ and $t_u$ are fixed.
Since we need to prove uniform convergence w.r.t. $s_u$ and $t_u$, we use a discretization method
instead. Let
\begin{align}
\label{f(u,x,y)}
&f(u,x,y):=\mathbb{P}\bigg(\max_{s\in \mathbb{S}}\xi_u(s)>x,\, \max_{t\in \mathbb{T}}\eta_u(t)>y\ \Big |\nonumber\\
&\qquad \qquad \qquad \qquad  X_1(s_u)=u-\frac{x}{u}, X_2(t_u)=u-\frac{y}{u}\bigg)
\end{align}
and
\begin{align}
\label{f(x,y)}
&f(x,y)
:=\mathbb{P}\Big(\max_{s\in \mathbb{S}}\xi(s)>x,\, \max_{t\in\mathbb{T}}\eta(t)>y\Big).
\end{align}
Without loss of generality, suppose that $\mathbb{S}=[a,b]^N$ and $\mathbb{T}=[c,d]^N$,
where $a<b,c<d$. For any $\delta\in (0,1),$ let $m=\left\lfloor \frac{b-a}{\delta}\right\rfloor$, $n=\left\lfloor \frac{d-c}{\delta}\right\rfloor$
and let
\begin{align*}
\mathcal{S}_m&:= \big\{s_{\mathbf{k}}\ |\ s_{\mathbf{k}}=(x_{k_1},\,..., \,x_{k_N}), \ \mathbf{k}=(k_1,...,k_N)\in \{0,1,...,m+1\}^N\big\},\\
\mathcal{T}_n&:= \big\{t_\mathbf{l}\ |\ t_\mathbf{l}=(y_{l_1},\,...,\,y_{l_N}), \ \mathbf{l}=(l_1,...,l_N)\in \{0,1,...,n+1\}^N \big\},
\end{align*}
where $x_i, y_i$ are defined as
\begin{align}
a&=x_0<x_1<\cdots<x_m\leq x_{m+1}=b,\ x_i=a+i\delta, \ i=0,1,\ldots,m,\nonumber\\
c&=y_0<y_1<\cdots<y_n\leq y_{n+1}=d, \ \ y_i=c+i\delta, \ i=0,1,\ldots,n.
\end{align}
Then $[a,b]^N\times[c,d]^N$ can be divided into $\delta$-cubes with  vertices in $\mathcal{S}_m\times \mathcal{T}_n$.

The function $f(u,x,y)$ in \eqref{f(u,x,y)}  is bounded below by
\begin{align}
\label{lower bound of cmuc}
&f_{m,n}(u,x,y):=\mathbb{P}\bigg(\max_{s\in \mathcal{S}_m}\xi_u(s)>x,\, \max_{t\in \mathcal{T}_n}\eta_u(t)>y\ \Big | \nonumber\\
&\qquad\qquad\qquad\qquad \qquad \ X_1(s_u)=u-\frac{x}{u},X_2(t_u)=u-\frac{y}{u}\bigg)
\end{align}
and is bounded above by $g_{m,n}(u,x,y)$ which is defined as
\begin{align}
\label{upper bound of cmuc}
&\mathbb{P}\bigg(\max_{s\in \mathcal{S}_m}\xi_u(s)>x-\epsilon,\, \max_{t\in \mathcal{T}_n}\eta_u(t)>y-\epsilon\,
\Big |\, X_1(s_u)=u-\frac{x}{u},X_2(t_u)=u-\frac{y}{u}\bigg)\nonumber\\
&+\mathbb{P}\bigg(\max_{s\in \mathbb{S}}\xi_u(s)> x,\, \max_{s\in \mathcal{S}_m}\xi_u(s)\leq x-\epsilon\,
\Big|\,   X_1(s_u)=u-\frac{x}{u},X_2(t_u)=u-\frac{y}{u}\bigg)\nonumber\\
&+\mathbb{P}\bigg(\max_{t\in \mathbb{T}}\eta_u(t)>y,\, \max_{t\in \mathcal{T}_n}\eta_u(t)\leq y-\epsilon\,
\Big|\,   X_1(s_u)=u-\frac{x}{u},X_2(t_u)=u-\frac{y}{u}\bigg)\nonumber\\
& \triangleq  f_{m,n}(u,x-\epsilon,y-\epsilon)+s_{m,n}(u,x,y)+t_{m,n}(u,x,y),
\end{align}
where $\epsilon>0$ is any small constant. Let
\begin{align}
\label{lower bound of cmuc 1}
&f_{m,n}(x,y)
:=\mathbb{P}\bigg(\max_{s\in \mathcal{S}_m}\xi(s)>x,\, \max_{t\in \mathcal{T}_n}\eta(t)>y\bigg).
\end{align}
Since the finite dimensional distributions of $(\xi_u(\cdot), \eta_u(\cdot))$ converge uniformly to those
of $(\xi(\cdot),\eta(\cdot))$, we have
\begin{equation}
\label{lower bound of cmuc 3}
\lim_{u\rightarrow \infty} \max_{|\tau_u|\leq C_0\sqrt{\log u}/u}|f_{m,n}(u,x,y)-f_{m,n}(x,y)|=0.
\end{equation}
The continuity of the trajectory of $(\xi(\cdot),\eta(\cdot))$  yields
\begin{equation}
\label{lower bound of cmuc 4}
\lim_{\substack {m\rightarrow \infty \\ n\rightarrow \infty }}f_{m,n}(x,y)=f(x,y).
\end{equation}
By \eqref{lower bound of cmuc 3} and \eqref{lower bound of cmuc 4}, we conclude
\begin{equation}
\label{lower bound of cmuc 5}
\lim_{\substack {m\rightarrow \infty \\ n\rightarrow \infty }}\lim_{u\rightarrow \infty} \max_{|\tau_u|\leq
C_0\sqrt{\log u}/u}|f_{m,n}(u,x,y)-f(x,y)|=0.
\end{equation}
Let us consider the conditional probability $s_{m,n}(u,x,y)$ in \eqref{upper bound of cmuc}.
\begin{align}
\label{upper bound of cmuc 1}
&s_{m,n}(u,x,y)\nonumber\\
&\leq \mathbb{P}\bigg(\max_{|s-t|\leq \delta}|\xi_u(s)-\xi_u(t)|>\epsilon\, \left| \,  X_1(s_u)=u-\frac{x}{u},\,X_2(t_u)=u-\frac{y}{u}\right.\bigg)\nonumber\\
&\leq \frac{1}{\epsilon}\mathbb{E}\left(\max_{|s-t|\leq \delta}|\xi_u(s)-\xi_u(t)|\, \left | \,  X_1(s_u)=u-\frac{x}{u},\, X_2(t_u)=u-\frac{y}{u}\right.\right)\nonumber\\
&=\frac{1}{\epsilon}\mathbb{E}_{\mathbb{P}_u}\bigg(\max_{|s-t|\leq \delta}|x(s)-x(t)|\bigg),
\end{align}
where $\mathbb{P}_u$  is the probability measure on $(C(\mathbb{S}),\mathcal{B}(C(\mathbb{S}))$ defined as
\begin{align*}
&\mathbb{P}_u(A):=\mathbb{P}\bigg(\xi_u(\cdot)\in A\ \left |\ X_1(s_u)=u-\frac{x}{u},X_2(t_u)=u-\frac{y}{u}\right.\bigg),
\end{align*}
for all $A \in \mathcal{B}(C(\mathbb{S}))$ and $x(\cdot)$ is the coordinate random element on $(C(\mathbb{S}),
\mathcal{B}(C(\mathbb{S})),$ $\mathbb{P}_u)$, i.e., $x(t,\omega)=\omega (t), \ \forall \omega \in C(\mathbb{S})$ and $t \in \mathbb{S}$.
Consider the canonical metric
\begin{align}
d_u(s,t):&= \left[\mathbb{E}_{\mathbb{P}_u} \big(|x(s)-x(t)|^2\big)\right]^{1/2}\nonumber\\
&=\left[\mathbb{E}\left(|\xi_u(s)-\xi_u(t)|^2\ \left | \  X_1(s_u)=u-\frac{x}{u},X_2(t_u)=u-\frac{y}{u}\right.\right)\right]^{1/2}. \nonumber
\end{align}
By \eqref{conditional variance 4}, for $u$ large enough and all $s_u$, $t_u$ such that $|\tau_u|\leq C_0\sqrt{\log u}/u$, we have
\begin{equation} \label{Eq:canmetric}
d_u(s,t)\leq 2\sqrt{c_1}|s-t|^{\alpha_1/2},
\end{equation}
which implies $\forall s\in \mathbb S$,
\begin{align*}
\big\{t\in \mathbb S\ |\ |t-s|\leq (\epsilon/2\sqrt{c_1})^{\frac{2}{\alpha_1}}\big\}\subseteq \{t\in 
\mathbb S\ |\ d_u(s,t)\leq \epsilon\}.
\end{align*}
Hence
\begin{equation}\label{Eq:N}
N_{d_u}(\mathbb{S},\epsilon)\leq  C_0 \epsilon^{-2N/\alpha_1},
\end{equation}
where $N_{d_u}(\mathbb{S},\epsilon)$ denotes the minimum number of $d_u$-balls with radius $\epsilon$
that are needed to cover $\mathbb{S}$.  By Dudley's Theorem (see, e.g., Theorem 1.3.3 in
\cite{Adler_Taylor_2007}) and (\ref{Eq:canmetric}),  we have
\begin{equation}
\label{upper bound of cmuc 2}
\mathbb{E}_{\mathbb{P}_u} \bigg(\max_{|s-t|\leq \delta}|x(s)-x(t)|\bigg)
\leq K\int_0^{2\sqrt{c_1}\delta^{\alpha_1/2}} \sqrt{\log N_{d_u}(\mathbb{S},\epsilon)}\, d\epsilon,
\end{equation}
where  $K< \infty$ is a constant (which does not depend on $\delta$) and, thanks to (\ref{Eq:N}),
the last integral goes to 0 as $\delta \rightarrow 0$ (or, equivalently, as $m\rightarrow \infty, n\rightarrow \infty$).
By \eqref{upper bound of cmuc 1} and \eqref{upper bound of cmuc 2}, we conclude that
\begin{equation}
\label{upper bound of cmuc 4}
\lim_{\substack {m\rightarrow \infty \\ n\rightarrow \infty }}\limsup_{u\rightarrow \infty} \max_{|\tau_u|\leq C_0\sqrt{\log u}|/u}|s_{m,n}(u,x,y)|=0.
\end{equation}
A similar argument shows that
\begin{equation}
\label{upper bound of cmuc 5}
\lim_{\substack {m\rightarrow \infty \\ n\rightarrow \infty }}\limsup_{u\rightarrow \infty} \max_{|\tau_u|\leq C_0\sqrt{\log u}|/u}|t_{m,n}(u,x,y)|=0.
\end{equation}
Since
\begin{align}
\label{limit of f(u,x,y)}
&|f(u,x,y)-f(x,y)|\leq
 |f_{m,n}(u,x,y)-f(x,y)|+|g_{m,n}(u,x,y)-f(x,y)|\nonumber\\
&\leq
 |f_{m,n}(u,x,y)-f(x,y)|+|f_{m,n}(u,x-\epsilon,y-\epsilon)-f(x-\epsilon,y-\epsilon)|\nonumber\\
&\qquad + |f(x-\epsilon,y-\epsilon)-f(x,y)|+|s_{m,n}(u,x,y)|+|t_{m,n}(u,x,y)|,
\end{align}
we combine \eqref{lower bound of cmuc 5}, \eqref{upper bound of cmuc 4} and \eqref{upper bound of cmuc 5} to obtain
\begin{align}
\label{limit of f(u,x,y) 1}
&\limsup_{u\rightarrow \infty} \max_{|\tau_u|\leq C_0\sqrt{\log u}|/u}|f(u,x,y)-f(x,y)| \leq  |f(x-\epsilon,y-\epsilon)-f(x,y)|\nonumber\\
&\qquad +
 \lim_{\substack {m\rightarrow \infty \\ n\rightarrow \infty }}\limsup_{u\rightarrow \infty} \max_{|\tau_u|\leq C_0\sqrt{\log u}/u}
 \Big(|f_{m,n}(u,x,y)-f(x,y)|  +|s_{m,n}(u,x,y)|\nonumber\\
 &\qquad+|t_{m,n}(u,x,y)| + |f_{m,n}(u,x-\epsilon,y-\epsilon)-f(x-\epsilon,y-\epsilon)|\Big)\nonumber\\
&= |f(x-\epsilon,y-\epsilon)-f(x,y)|.\nonumber
\end{align}
Since the last term $\rightarrow 0$ as $\epsilon \downarrow 0$, we have completed the
proof of the second part of the lemma.
\end{proof}

Now we are ready to prove the main lemmas in Section 3.

\begin{proof}[Proof of Lemma \ref{main lemma}]
Let $\phi(a,b)$ be the density of $(X_1(s_u),X_2(t_u))^T$, i.e.,
\begin{equation}
\phi(a,b)=\frac{1}{2\pi\sqrt{1-r^2(|\tau_u|)}}\exp\left\{-\frac{1}{2}\frac{a^2-2r(|\tau_u|)ab+b^2}{1-r^2(|\tau_u|)}\right\}.
\end{equation}
By conditioning and a change of variables, the LHS of \eqref{local double extremes asymptotics} becomes
\begin{align}
\label{main lemma 1}
&\mathbb{P}\bigg(\max_{s\in s_u+u^{-2/\alpha_1}\mathbb{S}} X_1(s)>u, \max_{t\in t_u+u^{-2/\alpha_2}\mathbb{T}} X_2(t)>u\bigg)\nonumber\\
&=\int_{\mathbb{R}^2} \mathbb{P}\bigg(\max_{s\in s_u+u^{-2/\alpha_1}\mathbb{S}} X_1(s)>u, 
\max_{t\in t_u+u^{-2/\alpha_2}\mathbb{T}} X_2(t)>u\ \left |\ X_1(s_u)=u-\frac{x}{u},\right.\nonumber\\
&\qquad \qquad \ \ X_2(t_u)=u-\frac{y}{u}\bigg)\phi\Big(u-\frac{x}{u},u-\frac{y}{u}\Big)u^{-2}dxdy\nonumber\\
&=\frac{1}{2\pi\sqrt{1-r^2(|\tau_u|)}u^2}\exp\left(-\frac{u^2}{1+r(|\tau_u|)}\right)\int_{\mathbb{R}^2} f(u,x,y)\tilde\phi(u,x,y)dxdy,
\end{align}
where $f(u,x,y)$ is defined in \eqref{f(u,x,y)} with $\xi_u(\cdot),\eta_u(\cdot)$ in  \eqref{tangent field}, and where
\begin{align*}
&\tilde \phi(u,x,y)\nonumber\\
:=&\exp\bigg\{-\frac{1}{2(1-r^2(|\tau_u|))}\Big(\frac{x^2+y^2}{u^2}-2(1-r(|\tau_u|))(x+y)-2r(|\tau_u|)\frac{xy}{u^2}\Big)\bigg\}.
\end{align*}
Since $\max_{|\tau_u|\leq C_0\sqrt{\log u}/u}|r(|\tau_u|)-\rho|\rightarrow 0$ as $u\rightarrow \infty$, it is easy to check that
\begin{equation}
\label{lim of tilde phi}
\max_{|\tau_u|\leq C_0\sqrt{\log u}/u}\left|\tilde{\phi}(u,x,y)-e^{\frac{x+y}{1+\rho}}\right|\rightarrow 0, \ \text{ as }\  u\rightarrow \infty.
\end{equation}
Recall $H_\alpha(\cdot)$ in \eqref{H_alpha(S,T)} and $f(x,y)$ in \eqref{f(x,y)}. Since $\xi(\cdot)$, $\eta(\cdot)$ are independent, and
\begin{align*}
\{\xi(t), t \in \R^N\} \stackrel{d}{=} \bigg\{(1+\rho)\Big[\chi_1\Big(\Big(\frac{\sqrt{c_1}}{1+\rho}\Big)^{\frac{2}{\alpha_1}}t\Big)-
\left|\Big(\frac{\sqrt{c_1}}{1+\rho}\Big)^{\frac{2}{\alpha_1}}t\right |^{\alpha_1}\Big], t\in \R^N\bigg\}, \\
\{ \eta(t), t \in \R^N\} \stackrel{d}{= } \bigg\{(1+\rho)\Big[\chi_2\Big(\Big(\frac{\sqrt{c_2}}{1+\rho}\Big)^{\frac{2}{\alpha_2}}t\Big)-
\left|\Big(\frac{\sqrt{c_2}}{1+\rho}\Big)^{\frac{2}{\alpha_2}}t\right |^{\alpha_2}\Big], t\in \R^N\bigg\},
\end{align*}
where $\stackrel{d}{=}$ means equality of all finite dimensional distributions, we have
\begin{align}
\label{main lemma 4}
&\int_{\mathbb{R}^2} f(x,y)e^{\frac{x+y}{1+\rho}}dxdy\nonumber\\
&=\int_{\mathbb{R}}e^{\frac{x}{1+\rho}}\mathbb{P}\Big(\max_{s\in \mathbb{S}}\xi(s)>x\Big)dx
\int_{\mathbb{R}}e^{\frac{y}{1+\rho}}\mathbb{P}\Big(\max_{t\in \mathbb{T}}\eta(t)>y\Big)dy\nonumber\\
&=(1+\rho)^2H_{\alpha_1}\bigg(\frac{c_1^{1/\alpha_1}\mathbb{S}}{(1+\rho)^{\frac{2}{\alpha_1}}}\bigg)
H_{\alpha_2}\bigg(\frac{c_2^{1/\alpha_2}\mathbb{T}}{(1+\rho)^{\frac{2}{\alpha_2}}}\bigg).
\end{align}
By \eqref{main lemma 1} and \eqref{main lemma 4}, to conclude the lemma, it suffices to prove
\begin{equation}
\label{main lemma 2}
\lim_{u\rightarrow \infty}\int_{\mathbb{R}^2}\max_{|\tau_u|\leq C_0\sqrt{\log u}/u}\left|f(u,x,y) \tilde \phi(u,x,y)
-f(x,y)e^{\frac{x+y}{1+\rho}}\right|\,dxdy=0.
\end{equation}
Firstly, applying Lemma \ref{uniformly conditional convergence} together with \eqref{lim of tilde phi}, 
we have
\begin{align}
\label{main lemma 3}
&\max_{|\tau_u|\leq C_0\sqrt{\log u}/u}\left|f(u,x,y)\tilde \phi(u,x,y)-f(x,y)e^{\frac{x+y}{1+\rho}}\right|
\rightarrow 0, \ \text{as}\ u\rightarrow \infty.
\end{align}
Secondly, as in \cite{Ladneva_Piterbarg_2000}, we can find an integrable dominating function 
$g\in L({\mathbb{R}^2})$ such that for $u$ large enough,
\begin{equation}
\max_{|\tau_u|\leq C_0\sqrt{\log u}/u}\left|f(u,x,y)\tilde \phi(u,x,y)-f(x,y)e^{\frac{x+y}{1+\rho}}\right|\leq g(x,y).
\end{equation}
Therefore, \eqref{main lemma 2} follows from the dominated convergence theorem. This finishes the proof. 
\end{proof}

\begin{proof}[Proof of Lemma \ref{double double local extremes lemma}]
We first claim that for any compact sets $\mathbb{S}$ and $\mathbb{T}$, the identity
\begin{equation}
\label{relation of H(S,T) and H(S)}
H_\alpha(\mathbb{S})+H_\alpha(\mathbb{T})-H_\alpha(\mathbb{S}\cup \mathbb{T})=H_\alpha(\mathbb{S},\mathbb{T}) 
\end{equation}
holds. Indeed, if we let $X=\sup_{t\in \mathbb{S}}(\chi(t)-|t|^\alpha)$ and $Y=\sup_{t\in \mathbb{T}}(\chi(t)-|t|^\alpha)$, then
\begin{align*}
&H_\alpha(\mathbb{S})+H_\alpha(\mathbb{T})-H_\alpha(\mathbb{S}\cup \mathbb{T})=\mathbb{E}\big(e^X\big)
+\mathbb{E}\big(e^Y\big)-\mathbb{E}\big(e^{\max(X,Y)}\big)\\
&=\mathbb{E}\big(e^X1_{\{X<Y\}}\big)+\mathbb{E}\big(e^Y1_{\{X\geq Y\}}\big)=\mathbb{E}\big(e^{\min(X,Y)}\big)=H_\alpha(\mathbb{S},\mathbb{T}).
\end{align*}
Now let $\mathbb{T}_1=[0,T]^N$, $\mathbb{T}_2=[\mathbf{m}T,(\mathbf{m}+1)T]$ and $\mathbb{T}_3=[\mathbf{n}T,(\mathbf{n}+1)T]$. 
Consider the events
\begin{align*}
&A=\left\{\max_{s\in s_u+u^{-2/\alpha_1}\mathbb{T}_1} X_1(s)>u\right\},\ \ B=\left\{\max_{s\in s_u+u^{-2/\alpha_1}\mathbb{T}_2} X_1(s)>u\right\},\\
&C=\left\{\max_{t\in t_u+u^{-2/\alpha_2}\mathbb{T}_1} X_2(t)>u\right\},\ \ D=\left\{\max_{t\in t_u+u^{-2/\alpha_2}\mathbb{T}_3} X_2(t)>u\right\}.
\end{align*}
It is easy to check that the LHS of \eqref{double double local extremes asymptotics} is equal to
\begin{align}
\label{double double local extremes asymptotics 1}
&\mathbb{P}(A\cap B\cap C\cap D)\nonumber\\
&=[\mathbb{P}(A\cap C)+\mathbb{P}( B\cap C)-\mathbb{P}((A\cup B)\cap C)]\nonumber\\
&+[\mathbb{P}(A\cap D)+\mathbb{P}( B\cap D)-\mathbb{P}((A\cup B)\cap D)]\nonumber\\
&-[\mathbb{P}(A\cap (C\cup D))+\mathbb{P}( B\cap (C\cup D))-\mathbb{P}((A\cup B)\cap (C\cup D))].
\end{align}
Let $R(u)=\frac{(1+\rho)^2}{2\pi \sqrt{1-\rho^2}}u^{-2}\exp\left(-\frac{u^2}{1+r(|\tau_u|)}\right)$ 
and $q_{\alpha,c}=\frac{(1+\rho)^{2/\alpha}}{c^{1/\alpha}}$. By Lemma \ref{main lemma}, we have
\begin{align*}
&\mathbb{P}(A\cap C)=R(u)H_{\alpha_1}\left(\frac{\mathbb{T}_1}{q_{\alpha_1,c_1}}\right)H_{\alpha_2}\left(\frac{\mathbb{T}_1}{q_{\alpha_2,c_2}}\right)(1+\gamma_1(u)),\\
&\mathbb{P}(B\cap C)=R(u)H_{\alpha_1}\left(\frac{\mathbb{T}_2}{q_{\alpha_1,c_1}}\right)H_{\alpha_2}\left(\frac{\mathbb{T}_1}{q_{\alpha_2,c_2}}\right)(1+\gamma_2(u)),\\
&\mathbb{P}((A\cup B)\cap C)=R(u)H_{\alpha_1}\left(\frac{\mathbb{T}_1\cup \mathbb{T}_2}{q_{\alpha_1,c_1}}\right)H_{\alpha_2}\left(\frac{\mathbb{T}_1}{q_{\alpha_2,c_2}}\right)(1+\gamma_3(u)),
\end{align*}
where, for $i = 1, 2, 3$,  $\gamma_i(u)\rightarrow 0$ uniformly w.r.t. $\tau_u$ satisfying $|\tau_u|\leq C_0\sqrt{\log u}/u$, as $u\rightarrow \infty$.
These, together with (\ref{relation of H(S,T) and H(S)}), imply
\begin{align}
\label{double double local extremes asymptotics 2}
&\mathbb{P}(A\cap C)+\mathbb{P}( B\cap C)-\mathbb{P}((A\cup B)\cap C)\nonumber\\
&=R(u)H_{\alpha_2}\left(\frac{\mathbb{T}_1}{q_{\alpha_2,c_2}}\right)H_{\alpha_1}\left(\frac{\mathbb{T}_1}{q_{\alpha_1,c_1}},
\frac{\mathbb{T}_2}{q_{\alpha_1,c_1}}\right)(1+o(1)).
\end{align}
Similarly, we have
\begin{align}
\label{double double local extremes asymptotics 3}
&\mathbb{P}(A\cap D)+\mathbb{P}( B\cap D)-\mathbb{P}((A\cup B)\cap D)\nonumber\\
&=R(u)H_{\alpha_2}\left(\frac{ \mathbb{T}_3 }{q_{\alpha_2,c_2}}\right)H_{\alpha_1}
\left(\frac{\mathbb{T}_1}{q_{\alpha_1,c_1}},\frac{\mathbb{T}_2}{q_{\alpha_1,c_1}}\right)(1+o(1))
\end{align}
and
\begin{align}
\label{double double local extremes asymptotics 4}
&\mathbb{P}(A\cap (C\cup D))+\mathbb{P}( B\cap (C\cup D))-\mathbb{P}((A\cup B)\cap (C\cup D))\nonumber\\
&=R(u)H_{\alpha_2}\left(\frac{\mathbb{T}_1\cup \mathbb{T}_3 }{q_{\alpha_2,c_2}}\right)H_{\alpha_1}\left(\frac{\mathbb{T}_1}{q_{\alpha_1,c_1}},\frac{\mathbb{T}_2}{q_{\alpha_1,c_1}}\right)(1+o(1)).
\end{align}
By \eqref{double double local extremes asymptotics 1} -- \eqref{double double local extremes asymptotics 4}, we have
\begin{align*}
&\mathbb{P}(A\cap B\cap C\cap D)\nonumber\\
&=R(u)H_{\alpha_1}\left(\frac{\mathbb{T}_1}{q_{\alpha_1,c_1}},\frac{\mathbb{T}_2}{q_{\alpha_1,c_1}}\right)
H_{\alpha_2,c_2}\left(\frac{\mathbb{T}_1}{q_{\alpha_2,c_2}},\frac{\mathbb{T}_3}{q_{\alpha_2,c_2}}\right)(1+o(1)),
\end{align*}
which concludes the lemma.
\end{proof}

\begin{proof}[Proof of Lemma \ref{lem_Riemann Sum}]
\label{proof of lemma_Riemann Sum}
Let $f(|t|)=\frac{1}{1+r(|t|)}$. Recall $\tau_{\mathbf{k}\mathbf{l}}$ defined in \eqref{def_tau_kl} 
and $|\tau_{\mathbf{k}\mathbf{l}}|\leq 2\delta(u)$, when $u$ is large. By Taylor's expansion,
\begin{align*}
f(|\tau_{\mathbf{k}\mathbf{l}}|)=f(0)+\frac{1}{2}f''(0)|\tau_{\mathbf{k}\mathbf{l}}|^2(1+\gamma_{\mathbf{k}\mathbf{l}}(u)),
\end{align*}
where $f(0)=\frac{1}{1+ \rho}$, $f''(0)=\frac{-r''(0)}{(1+\rho)^2}$ and, as $u\rightarrow \infty$, $\gamma_{\mathbf{k}\mathbf{l}}(u)$ 
converges to zero uniformly  w.r.t. all $(\mathbf{k},\mathbf{l})\in \mathcal{C}$.
Therefore, for any $\epsilon >0$,  we have
\begin{equation}
\label{bounds for h(u)}
\sum_{(\mathbf{k},\mathbf{l})\in \mathcal{C}} e^{-\frac{1}{2}f''(0)(1+\epsilon)u^2|\tau_{\mathbf{k}\mathbf{l}}|^2}\leq h(u)\leq \sum_{(\mathbf{k},\mathbf{l})\in \mathcal{C}} e^{-\frac{1}{2}f''(0)(1-\epsilon)u^2|\tau_{\mathbf{k}\mathbf{l}}|^2}
\end{equation}
when $u$ is large enough. For $ a>0$, let
\begin{equation}
h(u,a):=\sum_{(\mathbf{k},\mathbf{l})\in \mathcal{C}} e^{-au^2|\tau_{\mathbf{k}\mathbf{l}}|^2}.
\end{equation}
In order to prove (\ref{h(u)}), it suffices to prove that
\begin{equation}
\label{h(u,a)}
\lim_{u\rightarrow \infty}u^Nd_1^N(u)d_2^N(u)h(u,a)=\left(\frac{\pi}{a}\right)^\frac{N}{2}mes_N(A_1\cap A_2).
\end{equation}
To this end, we write
\begin{align}
\label{intuition}
&u^Nd_1^N(u)d_2^N(u)h(u,a)\nonumber\\
&=\frac{1}{u^N}\sum_{(\mathbf{k},\mathbf{l})\in \mathcal{C}} e^{-a\sum_{j=1}^N({l_jud_2(u)-k_jud_1(u)})^2}\cdot(ud_1(u))^N(ud_2(u))^N.
\end{align}
Let 
\begin{align*}
\label{bounds for h(u,a)}
&p(u):=\frac{1}{u^N}\sum_{(\mathbf{k},\mathbf{l})\in \mathcal{C}} \min_{(s,t)\in u\Delta_\mathbf{k}^{(1)}\times u\Delta_\mathbf{l}^{(2)}}e^{-a|t-s|^2}\cdot(ud_1(u))^N(ud_2(u))^N,\nonumber\\
&q(u):=\frac{1}{u^N}\sum_{(\mathbf{k},\mathbf{l})\in \mathcal{C}} \max_{(s,t)\in u\Delta_\mathbf{k}^{(1)}\times u\Delta_\mathbf{l}^{(2)}}e^{-a|t-s|^2}\cdot(ud_1(u))^N(ud_2(u)^N.
\end{align*}
It follows from (\ref{intuition}) that
\begin{equation}\label{Eq:pq1}
p(u)\leq u^Nd_1^N(u)d_2^N(u)h(u,a)\leq q(u),
\end{equation}
and
\begin{equation} \label{Eq:pq2}
p(u)\leq \frac{1}{u^N}\int_{\substack{s \in uA_1,t\in uA_2\\ |t-s|\leq C\sqrt{\log u}}}e^{-a|t-s|^2}dtds\leq q(u).
\end{equation}
Observe that
\begin{align} \label{Eq:pq3}
&\frac{1}{u^N}\iint_{\substack{s\in uA_1,t\in uA_2\\ |t-s|\leq C\sqrt{\log u}}}e^{-a|t-s|^2}dtds =\frac{1}{u^N}\iint_{\substack{y \in uA_1,x+y\in uA_2\\ |x|\leq C\sqrt{\log u}}}e^{-a|x|^2}dxdy\nonumber\\
&=\frac{1}{u^N}\int_{|x|\leq C\sqrt{\log u}}e^{-a|x|^2}dx\int_{\mathbb{R}^N}1_{\{y\in uA_1\cap (uA_2-x)\}}dy\nonumber\\
&=\int_{|x|\leq C\sqrt{\log u}}e^{-a|x|^2}dx\int_{\mathbb{R}^N}1_{\{z\in A_1\cap (A_2-x/u)\}}dz\nonumber\\
&\rightarrow \ mes_N(A_1\cap A_2)\int_{\mathbb{R}^N} e^{-a|x|^2}dx =\left(\frac{\pi}{a}\right)^\frac{N}{2}mes_N(A_1\cap A_2),
\end{align}
as $u\rightarrow \infty$, where the convergence holds by the dominated convergence theorem. Indeed, 
$\int_{\mathbb{R}^N}1_{\{z\in A_1\cap (A_2-x/u)\}}dz$ is bounded by $\max_{|\epsilon|<1} mes_N(A_1\cap (A_2-\epsilon))$ 
uniformly for $|x|\leq C\sqrt{\log u}$ when $u$ is large enough.

It follows from (\ref{Eq:pq1})--(\ref{Eq:pq1}) that, for concluding \eqref{h(u,a)}, 
it remains to verify  
\begin{equation}
\label{D(u) def}
D(u):=q(u)-p(u)\rightarrow 0, \ \text{as}\ u\rightarrow \infty.
\end{equation}

Define
\begin{align}
\mathcal{\hat D}:=\Big\{(s,t)\in A_1\times A_2:\,  |t-s|\leq \delta(u)+\sqrt{N}d_1(u)+\sqrt{N}d_2(u)\Big\}.
\end{align}
By the definition of $\mathcal{C}$ in \eqref{mathcal C}, we see that
$
\mathcal{D}\subseteq \bigcup_{(\mathbf{k},\mathbf{l})\in \mathcal{C}}\Delta_{\mathbf{k}}^{(1)}\times \Delta_{\mathbf{l}}^{(2)}\subseteq  \mathcal{\hat D}.
$
Since $d_1(u)=o(\delta(u))$ and $d_2(u)=o(\delta(u))$ as $u\rightarrow \infty$, the set 
$\mathcal{\hat D}$ is a subset of $\mathcal{\tilde D}:=\{(s,t)\in A_1\times A_2:\,  |t-s|\leq 2\delta(u)\}$ when $u$ is large.

Write $D(u)$ in (\ref{D(u) def}) as a sum over $(\mathbf{k},\mathbf{l})\in \mathcal{C}$. To estimate the cardinality of $ \mathcal{C}$, 
we notice that  
\begin{align}
\label{upper bound of mes(tilde D)}
 mes_{2N}(\mathcal{\tilde D})&= \iint_{s\in A_1, t\in A_2}1_{\{|t-s|\leq 2\delta(u)\}}dsdt \\
 &=\int_{|x|\leq 2\delta(u)}\int_{y \in A_1\cap (A_2-x)}dydx
\leq K\,\delta(u)^N,
\end{align}
for all $u$ large enough, where $K=2^{N+1}\pi^{N/2}\Gamma^{-1}(N/2)\max_{| \epsilon| \leq 1} mes_N(A_1\cap (A_2-\epsilon))$.
Hence, for large $u$, the number of summands in (\ref{intuition}) is bounded by
\begin{equation}
\label{number of sums in D(u)}
\#\{(\mathbf{k},\mathbf{l})\ |\ (\mathbf{k},\mathbf{l})\in \mathcal{C}\}\leq
\frac{mes_{2N}(\mathcal{\tilde D})} {mes_{2N}(\Delta_{\mathbf{k}}^{(1)}\times 
\Delta_{\mathbf{l}}^{(2)})}\leq \frac{K\,\delta(u)^N} {d_1^N(u)d_2^N(u)}.
\end{equation}

Next, by applying the inequality $e^{-x}-e^{-y}\leq y-x$ for $y\geq x>0$ to each summand in $D(u)$, we obtain
\begin{align}
\label{bound for element in D(u)_0}
&\max_{(s,t)\in u\Delta_\mathbf{k}^{(1)}\times u\Delta_\mathbf{l}^{(2)}}e^{-a|t-s|^2}-\min_{(s,t)\in u\Delta_\mathbf{k}^{(1)}\times u\Delta_\mathbf{l}^{(2)}}e^{-a|t-s|^2}\nonumber\\
&\leq a\left(\max_{(s,t)\in u\Delta_\mathbf{k}^{(1)}\times u\Delta_\mathbf{l}^{(2)}}|t-s|^2-\min_{(s,t)\in u\Delta_\mathbf{k}^{(1)}\times u\Delta_\mathbf{l}^{(2)}}|t-s|^2\right)\nonumber\\
&=a \max \big(|t-s|+|t_1-s_1|)(|t-s|-|t_1-s_1|\big),
\end{align}
where the last maximum is taken over $(s,t,s_1,t_1) \in u\Delta_\mathbf{k}^{(1)}\times u\Delta_\mathbf{l}^{(2)}\times u\Delta_\mathbf{k}^{(1)}\times u\Delta_\mathbf{l}^{(2)}$

Since $|t-s|\leq 2\delta(u)$ for all $(t,s)\in u\Delta_\mathbf{k}^{(1)}\times u\Delta_\mathbf{l}^{(2)}$ when $u$ is large, the inequality
$\big||t-s|-|t_1-s_1|\big|\leq |t-t_1|+|s-s_1|$ implies that \eqref{bound for element in D(u)_0} is at most
\begin{align}
\label{bound for element in D(u)}
4a\sqrt{N}u^2\delta(u)\big(d_1(u)+d_2(u)\big)
\end{align}
when $u$ is large enough. By \eqref{bound for element in D(u)} and \eqref{number of sums in D(u)}, we can verify that  
\begin{align*}
D(u)&\leq  \frac{1}{u^N}\frac{K(\delta(u))^N}{d_1^N(u)d_2^N(u)}4a\sqrt{N}u^2\delta(u)\big(d_1(u)+d_2(u)\big)\big(ud_1(u)\big)^N\big(ud_2(u)\big)^N\nonumber\\
&\leq C_0 (\log u)^{\frac{N+1}{2}}\big(u^{1-\frac{2}{\alpha_1}}+u^{1-\frac{2}{\alpha_2}}\big)\, \rightarrow 0,\ \text{ as }\ u\rightarrow \infty.
\end{align*}
Therefore  \eqref{h(u,a)} holds. Similarly, we can check that the same statement holds while changing the set $\mathcal{C}$ to $\mathcal{C}^\circ$.
\end{proof}

\begin{proof}[Proof of Lemma \ref{lem_mathcal_H(m)}]
Inequality \eqref{mathcal_H(0) bounds} holds immediately by Lemma $6.2$ in \cite{Piterbarg_1996}. Hence we only 
consider the case when $\mathbf{m}\neq \mathbf{0}$. Suppose that $\{X(t),t\in \mathbb{R}^N\}$ is a real valued 
continuous Gaussian process with $\E[X(t)]=0$ and covariance function $r(t)$ satisfying $r(t)=1-|t|^\alpha+o(|t|^\alpha)$ 
for a constant $\alpha \in (0,2)$. Applying Lemma $6.1$ in \cite{Piterbarg_1996}, we see that for any $S>0$,  
\begin{align}
\label{mathcal_H_alpha(m) with extreme prob}
&\mathbb{P}\bigg(\max_{t\in u^{-2/\alpha}[0,S]^N}X(t)>u,\, \max_{t\in u^{-2/\alpha}[\mathbf{m}S,(\mathbf{m}+1)S]}X(t)>u\bigg)\nonumber\\
&=\mathbb{P}\bigg(\max_{t\in u^{-2/\alpha}[0,S]^N}X(t)>u\bigg)+\mathbb{P}\bigg(\max_{t\in u^{-2/\alpha}[\mathbf{m}S,(\mathbf{m}+1)S]}X(t)>u\bigg)\nonumber\\
&\qquad  \ \ -\mathbb{P}\bigg(\max_{t\in u^{-2/\alpha}([0,S]^N\cup [\mathbf{m}S,(\mathbf{m}+1)S])}X(t)>u)\bigg)\nonumber\\
&=\Big(H_\alpha([0,S]^N)+H_\alpha([\mathbf{m}S,(\mathbf{m}+1)S])-H_\alpha([0,S]^N\cup [\mathbf{m}S,(\mathbf{m}+1)S])\Big)\nonumber\\
&\qquad \ \ \times \frac{1}{\sqrt{2\pi}u}e^{-\frac{1}{2}u^2}(1+o(1))\nonumber\\
&=H_\alpha([0,S]^N,[\mathbf{m}S,(\mathbf{m}+1)S])\frac{1}{\sqrt{2\pi}u}e^{-\frac{1}{2}u^2}(1+o(1)),\ \text{ as }\ u\rightarrow \infty,
\end{align}
where the last equality holds thanks to \eqref{relation of H(S,T) and H(S)}.

On the other hand, by applying Lemma $6.3$ in \cite{Piterbarg_1996} and the inequality
$\inf_{s\in [0,1]^N,t\in [\mathbf{m},\mathbf{m}+1]}|s-t|\geq |m_{i_0}|-1$ (recall that $i_0$ is defined in Lemma \ref{lem_mathcal_H(m)}),  we have
\begin{align}
\label{Lemma 6.3 in Piterbarg}
&\mathbb{P}\bigg(\max_{t\in u^{-2/\alpha}[0,S]^N}X(t)>u,\, \max_{t\in u^{-2/\alpha}[\mathbf{m}S,(\mathbf{m}+1)S]}X(t)>u\bigg)\nonumber\\
&\leq C_0 S^{2N}\frac{1}{\sqrt{2\pi}u}e^{-\frac{1}{2}u^2}\exp\left(-\frac{1}{8}(|m_{i_0}|-1)^\alpha S^\alpha\right)
\end{align}
for all $u$ large enough. It follows from \eqref{mathcal_H_alpha(m) with extreme prob} and \eqref{Lemma 6.3 in Piterbarg} that
\begin{equation}
H_\alpha([0,S]^N,[\mathbf{m}S,(\mathbf{m}+1)S])\leq C_0S^{2N}\exp\left(-\frac{1}{8}(|m_{i_0}|-1)^\alpha S^\alpha\right),
\end{equation}
which implies \eqref{mathcal_H(m) bounds} by letting $S=\frac{c^{1/\alpha}T}{(1+\rho)^{2/\alpha}}$.

When $|m_{i_0}|=1$, the above upper bound is not sharp. Instead, we derive \eqref{mathcal_H(1) bounds} 
in Lemma \ref{lem_mathcal_H(m)} as follows. For concreteness, suppose that $i_0=N$ and $m_N=1$. By applying Lemmas 
$6.1$ - $6.3$ in  \cite{Piterbarg_1996}, we have
\begin{align}
&\mathbb{P}\bigg(\max_{t\in u^{-2/\alpha}[0,S]^N}X(t)>u,\, \max_{t\in u^{-2/\alpha}[\mathbf{m}S,(\mathbf{m}+1)S]}X(t)>u\bigg)\nonumber\\
&\leq \mathbb{P}\bigg(\max_{t\in u^{-2/\alpha}(\prod_{j=1}^{N-1}[m_jS,(m_j+1)S]\times[S,S+\sqrt{S}])}X(t)>u\bigg)\nonumber\\
&+\mathbb{P}\bigg(\max_{t\in u^{-2/\alpha}[0,S]^N}X(t)>u,\, \max_{t\in u^{-2/\alpha}(\prod_{j=1}^{N-1}[m_jS,(m_j+1)S]\times[S+\sqrt{S},2S+\sqrt{S}])}X(t)>u\bigg)\nonumber\\
&\leq C_0 S^{N-\frac{1}{2}}\frac{1}{\sqrt{2\pi}u}e^{-\frac{1}{2}u^2}+C_0 S^{2N}\frac{1}{\sqrt{2\pi}u}e^{-\frac{1}{2}u^2}e^{-\frac{1}{8}S^{\alpha/2}}\nonumber\\
&\leq C_0 S^{N-\frac{1}{2}}\frac{1}{\sqrt{2\pi}u}e^{-\frac{1}{2}u^2} 
\end{align}
for $u$ and $S$ large. Hence, when $|m_{i_0}|=1$,  we have
\begin{equation}
H_\alpha([0,S]^N,[\mathbf{m}S,(\mathbf{m}+1)S])\leq C_0 S^{N-\frac{1}{2}}
\end{equation}
for large $S$. This  implies \eqref{mathcal_H(1) bounds} by letting $S=\frac{c^{1/\alpha}T}{(1+\rho)^{2/\alpha}}$.

Notice that
\begin{equation}
\#\{m\in \mathbb{Z}^N\ |\ \max_{1\leq i\leq N}|m_i|=k\}=(2k+1)^N-(2k-1)^N,\ k=1,2,....
\end{equation}
By \eqref{mathcal_H(1) bounds}, \eqref{mathcal_H(m) bounds} and the fact that $\int_T^\infty x^{N-1} e^{-a x^\alpha}dx
\sim \frac {1} {a\alpha} T^{N-\alpha} e^{-aT^\alpha}$ as $T \to \infty$, we have
\begin{align*}
&\sum_{\mathbf{m}\neq \mathbf{0}}\mathcal{H}_{\alpha,c}(\mathbf{m})=\sum_{k=1}^\infty\sum_{|m_{i_0}|=k}\mathcal{H}_{\alpha,c}(\mathbf{m})\\&\leq C_0 (3^N-1)T^{N-\frac{1}{2}}+C_0 \sum_{k=2}^\infty [(2k+1)^N-(2k-1)^N]T^{2N}e^{-\frac{c}{8(1+\rho)^2}(k-1)^\alpha T^\alpha}\\
&\leq  C_0 (3^N-1)T^{N-\frac{1}{2}}+C_0 T^{2N}\int_{1}^\infty x^{N-1}e^{-\frac{c}{8(1+\rho)^2}x^\alpha T^\alpha}dx \leq  C_0  T^{N-\frac{1}{2}}
\end{align*}
for $T$ large enough. This completes the proof of Lemma \ref{lem_mathcal_H(m)}.
\end{proof}

\begin{proof} [Proof of Lemma \ref{lem_Riemann Sum-2}] 
The proof is similar to that of Lemma \ref{lem_Riemann Sum}.
Indeed, we only need to modify \eqref{intuition} and \eqref{upper bound of mes(tilde D)} in the proof of Lemma
\ref{lem_Riemann Sum}. For any $ y=(y_1,...,y_N)\in \mathbb{R}^N$ and $1\leq i\leq j\leq N$, let  $y_{i:j} =(y_i,...,y_j)$.
On one hand, with a different scaling, $h(u,a)$ in \eqref{intuition} has the following asymptotics:
\begin{align}
\label{intuition_2}
&u^{2N-M}d_1^N(u)d_2^N(u)h(u,a) \approx\frac{1}{u^M}\iint_{\substack{y \in uA_1,x+y\in uA_2\\ |x|\leq C\sqrt{\log u}}}e^{-a|x|^2}dxdy\nonumber\\
&= \frac{1}{u^M}\int_{|x|\leq C\sqrt{\log u}}e^{-a|x|^2}\bigg(\int_{\mathbb{R}^M}1_{\{y_{1:M}\in uA_{1,M}\cap (uA_{2,M}-x_{1:M})\}}dy_{1:M}\nonumber\\
&\qquad \times\prod_{j=M+1}^N\int_{\mathbb{R}}1_{\{y_j\in [uS_j,uT_j]\cap[uT_j-x_j,uR_j-x_j]\}}dy_j\bigg)\, dx\nonumber\\
&=\int_{|x|\leq C\sqrt{\log u}}e^{-a|x|^2}\prod_{j=M+1}^N x_j 1_{\big\{x_j>0\big\}}\bigg(\int_{\mathbb{R}^M}
1_{\big\{z_{1:M}\in A_{1,M}\cap (A_{2,M}-x_{1:M}/u)\big\}}dz_{1:M}\bigg)\,dx\nonumber\\
& \rightarrow \ mes_M(A_{1,M}\cap A_{2,M})\int_{\mathbb{R}^M} e^{-a|x_{1:M}|^2}dx_{1:M}\prod_{j=M+1}^N
\int_0^\infty x_je^{-ax_j^2}dx_j\nonumber\\
&=2^{M-N}\pi^{M/2}a^{M/2-N}mes_M(A_{1,M}\cap A_{2,M}),
\end{align}
as $u\rightarrow \infty$. 
On the other hand, when $u$ is large enough,  $mes_{2N}( \mathcal{\tilde D})$ defined in \eqref{upper bound of mes(tilde D)}
can be bounded above by
\begin{align}
\label{upper bound of mes(tilde D)_2}
&mes_{2N}( \mathcal{\tilde D})=\iint_{s\in A_1, t\in A_2}1_{\{|t-s|\leq 2\delta(u)\}}dsdt\nonumber\\
&=\int_{|x|\leq 2\delta(u)}\Big(\int_{y_{1:M} \in A_{1,M}\cap (A_{2,M}-x_{1:M})}dy_{1:M}\Big)\prod_{j=M+1}^Nx_j1_{\{x_j>0\}}dx\nonumber\\
&=\delta(u)^{2N-M}\int_{|z|\leq 2}\Big(\int_{y_{1:M} \in A_{1,M}\cap (A_{2,M}-z_{1:M}\delta(u))}dy_{1:M}\Big)\prod_{j=M+1}^Nz_j1_{\{z_j>0\}}dz\nonumber\\
&\leq K\, \delta(u)^{2N-M},
\end{align}
where $K=\max_{| \epsilon| \leq 1} mes_M(A_{1,M}\cap (A_{2,M}-\epsilon))\int_{|z|\leq 2}\prod_{j=M+1}^Nz_j1_{\{z_j>0\}}dz$.

By \eqref{intuition_2} and \eqref{upper bound of mes(tilde D)_2}, \eqref{h(u)_mes zero} can be obtained through
the same argument in the proof of Lemma \ref{lem_Riemann Sum}. We omit the details.
\end{proof}

We end this section with the proof of Lemma \ref{lem_uniform convergence of f.d.d.}.
\begin{proof}[Proof of Lemma \ref{lem_uniform convergence of f.d.d.}]
Let $f_{u,\tau_u}(\cdot)$ and $f(\cdot)$ be the density function of $X(u,\tau_u)$ and $X$, respectively. It suffices to prove that
for all $ x\in \mathbb{R}^N$,
\begin{align}
\label{uniform covergence of F_u}
\int_{\{y\leq x\}}f(y)\max_{\tau_u}\bigg|\frac{f_{u,\tau_u(y)}}{f(y)}-1\bigg|dy\rightarrow 0, \ \text{as}\ u\rightarrow \infty,
\end{align}
where $\{y\leq x\}= \prod_{i=1}^N(-\infty, x_i]$. 

First, we will find an upper bound for $\max_{\tau_u}|f_{u,\tau_u}(y)/f(y)-1|$. 
For any $ \epsilon>0$, define
\begin{align*}
&\Gamma(u,\tau_u)=(\gamma_{ij}(u,\tau_u))_{i,j=1,...n}:=\frac{1}{\epsilon}(\Sigma(u,\tau_u)- \Sigma)\\
&e(u,\tau_u)=(e_{i}(u,\tau_u))_{i=1,...n}:=\frac{1}{\epsilon}(\mu(u,\tau_u)-\mu).
\end{align*}
By Assumption \eqref{mean_var_unif_conver}, there exists a constant $U>0$ such that for all $u>U$,
\begin{align*}
\max_{\tau_u}|\mu_j(u,\tau_u)-\mu_j|<\epsilon,\ \ \ \max_{\tau_u}|\sigma_{ij}(u,\tau_u)-\sigma_{ij}|<\epsilon, \ \  i,j=1,\dots, n,
\end{align*}
which implies $|\gamma_{ij}(u,\tau_u)|\leq 1$ and $|e_{i}(u,\tau_u)|\leq 1$ for $u>U$.

Let $\Sigma^{-1}=(v_{ij})_{i,j=1,...,n}$ be the inverse of $\Sigma$. When $\epsilon$ is small, the determinant of
$\Sigma(u,\tau_u)$ satisfies
\begin{align*}
|\Sigma(u,\tau_u)|=|\Sigma +\epsilon \Gamma(u,\tau_u)|=|\Sigma|(1+\epsilon \hbox{tr}(\Sigma^{-1}
\Gamma(u,\tau_u))+O(\epsilon^2)),
\end{align*}
where $O(\epsilon^2)/\epsilon^2$ is uniformly bounded w.r.t. $\tau_u$ for large $u$
(see, e.g., \cite{Magnus_Heudecker_2007}, p. $169$). Hence, when $\epsilon$ is small enough, we have
\begin{align}
\label{ratio of two var det}
\left|\frac{|\Sigma(u,\tau_u)|}{|\Sigma|}-1\right|\leq 2\epsilon |\hbox{tr}(\Sigma^{-1}\Gamma(u,\tau_u))|\leq 2\epsilon \sum_{i,j}|v_{ij}|.
\end{align}
Since  $|\gamma_{ij}(u,\tau_u)|\leq 1,\  \forall i,j=1,...,n,\ \forall \tau_u$ for large $u$,
as $\epsilon \rightarrow 0$, the inverse of $\Sigma(u,\tau_u)$  can be written as
\begin{align*}
\Sigma(u,\tau_u)^{-1}=\Sigma^{-1}-\epsilon \Sigma^{-1}\Gamma(u,\tau_u)\Sigma^{-1}+O(\epsilon^2),
\end{align*}
where $O(\epsilon^2)/\epsilon^2$ is a matrix whose entries are uniformly bounded and independent of $\tau_u$ for large $u$
 (see, e.g., \cite{Meyer_2000}, p. $618$). Hence,
\begin{align*}
d_{u,\tau_u}(y):=&-\frac{1}{2}\Big[(y-\mu(u,\tau_u))^T\Sigma^{-1}(u,\tau_u)(y-\mu(u,\tau_u))- (y-\mu)^T\Sigma^{-1}(y-\mu)\Big]\nonumber\\
= &-\frac{1}{2}(y-\mu)^T\big(-\epsilon \Sigma^{-1}\Gamma(u,\tau_u)\Sigma^{-1}+O(\epsilon^2)\big)(y-\mu)\nonumber\\
&+\epsilon e^T(u,\tau_u)\big(\Sigma^{-1}-\epsilon \Sigma^{-1}\Gamma(u,\tau_u)\Sigma^{-1}+O(\epsilon^2)\big)(y-\mu)\nonumber\\
&-\frac{1}{2}\epsilon^2e^T(u,\tau_u)\big(\Sigma^{-1}-\epsilon \Sigma^{-1}\Gamma(u,\tau_u)\Sigma^{-1}+O(\epsilon^2)\big)e(u,\tau_u). 
\end{align*}
Since $|\gamma_{ij}(u,\tau_u)|$ and $|e_{i}(u,\tau_u)|$ are uniformly bounded by $1$ w.r.t. $\tau_u$ for all $u>U$, 
we derive that for any $y\in \mathbb{R}^N$,
\begin{align}
\label{limit of d_u,tau_u(y)}
\max_{\tau_u}|d_{u,\tau_u}(y)|\rightarrow 0,\ \text{as}\ u\rightarrow \infty.
\end{align}
By \eqref{ratio of two var det} and \eqref{limit of d_u,tau_u(y)}, for $y\in \mathbb{R}^N$,
\begin{align}
\label{pointwise conv of integrand}
\max_{\tau_u}\left|\frac{f_{u,\tau_u(y)}}{f(y)}-1\right|=\max_{\tau_u}\Big | e^{d_{u,\tau_u}(y)}
\frac{|\Sigma(u,\tau_u)|^{-1/2}}{|\Sigma|^{-1/2}}-1\Big|\rightarrow 0,\ \text{as} \ u\rightarrow \infty.
\end{align}
If we could further find an integrable function $g(y)$ on $\mathbb{R}^N$,
\begin{align}
\label{dominating function g}
f(y)\max_{\tau_u}\left|\frac{f_{u,\tau_u}(y)}{f(y)}-1\right|\leq g(y),
\end{align}
then \eqref{uniform covergence of F_u} holds by the dominated convergence theorem.

Given a constant $C_0$, let 
$
A_I:=\{(a_{ij})_{i,j=1}^n\in  \mathbb{R}^{N\times N}\ |\ \max_{i,j} |a_{i,j}|\leq C_0\},\ 
b_I:=\{(b_i)_{i=1}^n\in \mathbb{R}^N\ | \ \max_{i} |b_i|\leq C_0\}
$. Then there exist constants $C_2,C_3$, such that
\begin{align*}
|x^TAx|\leq C_2 x^Tx,\ \ \
|b^Tx|\leq C_3+x^Tx, \ \ \forall x\in \mathbb{R}^N, \forall A\in A_I, \forall b\in b_I.
\end{align*}
Hence, there exists a constant $C_4>0$ such that
\begin{align}
\label{ratio of two exp}
|d_{u,\tau_u}(y)|\leq& C_4\epsilon(y-\mu)^T(y-\mu)+ C_4\epsilon ,
\end{align}
By \eqref{ratio of two var det} and \eqref{ratio of two exp}, for small $\epsilon$ and large $u$,
there exists a constant $K$ such that
\begin{align*}
&\max_{\tau_u}\left|\frac{f_{u,\tau_u(y)}}{f(y)}-1\right|
\leq Ke^{C_4\epsilon(y-\mu)^T(y-\mu)}+1
\end{align*}
On the other hand, for all $y\in \mathbb{R}^N$,
\begin{align*}
f(y)\leq (2\pi)^{-n/2}|\Sigma|^{-1/2}e^{-\frac{\lambda}{2}(y-\mu)^T(y-\mu)},
\end{align*}
where $\lambda$ is the minimum eigenvalue of $\Sigma^{-1}$. If we choose $\epsilon<\frac{\lambda}{2C_4}$ and define
\begin{align*}
g(y):=(2\pi)^{-n/2}|\Sigma|^{-1/2}e^{-\frac{\lambda}{2}(y-\mu)^T(y-\mu)}(Ke^{C_4\epsilon(y-\mu)^T(y-\mu)}+1),
\end{align*}
then \eqref{dominating function g} holds and hence we have completed the proof.
\end{proof}

\bibliographystyle{amsalpha}



\end{document}